\documentclass{amsart}
\usepackage{amstext,amssymb,amsthm,amsopn,newlfont,graphpap,graphics,graphicx,mathrsfs,enumitem}
\allowdisplaybreaks
\usepackage[parfill]{parskip}
\usepackage[noadjust]{cite}
\usepackage{epigraph}
\usepackage[colorlinks=true,
            linkcolor=red,
            urlcolor=blue,
            citecolor=magenta]{hyperref}
\usepackage{color}
\usepackage{mathrsfs}
\allowdisplaybreaks
\theoremstyle{plain}
\newtheorem{thm}{Theorem}[section]
\newtheorem*{thm*}{Theorem}
\newtheorem{prop}{Proposition}[section]
\newtheorem*{prop*}{Proposition}
\newtheorem{cor}{Corollary}[section]
\newtheorem*{cor*}{Corollary}

\newtheorem*{lem*}{Lemma}
\theoremstyle{definition}
\newtheorem{defn}{Definition}[section]
\newtheorem*{defn*}{Definition}

\newtheorem*{exmps*}{Examples}

\newtheorem*{exmp*}{Example}

\newtheorem*{exerc*}{Exercise}

\newtheorem{rems}{Remarks}[section]
\newtheorem*{rems*}{Remarks}
\newtheorem{rem}[rems]{Remark}
\newtheorem*{rem*}{Remark}
\newcommand{\N}{{\mathbb N}}
\newcommand{\Z}{{\mathbb Z}}
\newcommand{\R}{{\mathbb R}}
\newcommand{\C}{{\mathbb C}}

\newcommand\restr[2]{\ensuremath{#1\big|{#2}}}

\DeclareMathOperator{\Rep}{Re\ignorespaces}
\DeclareMathOperator{\Imp}{Im\ignorespaces}
\DeclareMathOperator{\dist}{dist}
\DeclareMathOperator{\spa}{span}
\begin{document}
\title[On the Gevrey ultradifferentiability of weak solutions]
{On the Gevrey ultradifferentiability\\ 
of weak solutions\\
of an abstract evolution equation\\
with a scalar type spectral operator\\
on the real axis}
\author[Marat V. Markin]{Marat V. Markin}
\address{
Department of Mathematics\newline
California State University, Fresno\newline
5245 N. Backer Avenue, M/S PB 108\newline
Fresno, CA 93740-8001, USA
}
\email{mmarkin@csufresno.edu}
\subjclass{Primary 34G10, 47B40, 30D60, 30D15; Secondary 47B15, 47D06, 47D60}
\keywords{Weak solution, scalar type spectral operator, Gevrey classes}
\begin{abstract}
Given the abstract evolution equation
\[
y'(t)=Ay(t),\ t\in \mathbb{R},
\]
with a \textit{scalar type spectral operator} $A$ in a complex Banach space, we find conditions on $A$, formulated exclusively in terms of the location of its spectrum in the complex plane, \textit{necessary and sufficient} for all \textit{weak solutions} of the equation, which a priori need not be strongly differentiable, to be strongly Gevrey ultradifferentiable of order $\beta\ge 1$, in particular \textit{analytic} or \textit{entire}, on $\mathbb{R}$. We also reveal certain inherent smoothness improvement effects and show that, if all weak solutions of the equation are Gevrey ultradifferentiable of orders less than one, then the operator $A$ is necessarily \textit{bounded}. The important particular case of the equation with a \textit{normal operator} $A$ in a complex Hilbert space follows immediately.
\end{abstract}
\maketitle
\section[Introduction]{Introduction}

We find conditions on a scalar type spectral operator $A$ in a complex Banach space, formulated exclusively in terms of the location of its {\it spectrum} in the complex plane, \textit{necessary and sufficient} for all \textit{weak solutions} of the evolution equation
\begin{equation}\label{1}
y'(t)=Ay(t),\ t\in \R,
\end{equation}
which a priori need not be strongly differentiable, to be strongly Gevrey ultradifferentiable of order $\beta\ge 1$, in particular \textit{analytic} or \textit{entire}, on $\mathbb{R}$. We also reveal certain inherent smoothness improvement effects and show that, if all weak solutions of the equation are Gevrey ultradifferentiable of orders less than one, then the operator $A$ is necessarily \textit{bounded}. 

The important particular case of the equation with a \textit{normal operator} $A$ in a complex Hilbert space follows immediately.

We proceed along the path of developing the results of paper \cite{Markin2018(3)} on the strong differentiability of the weak solutions of equation \eqref{1} on $\R$ and of papers \cite{Markin2018(4),Markin2019(2),Markin2019(1)}, where similar consideration is given to the Gevrey ultradifferentiability of the weak solutions 
of the evolution equation
\begin{equation}\label{+}
y'(t)=Ay(t),\ t\ge 0,
\end{equation}
on $[0,\infty)$ and $(0,\infty)$.

\section[Preliminaries]{Preliminaries}

Here, we briefly outline certain facts essential for the subsequent discourse (for more, see, e.g., \cite{Markin2018(2),Markin2018(3),Markin2018(4),Markin2019(1)}).

\subsection{Weak Solutions}\ 

\begin{defn}[Weak Solution]\label{ws}\ \\
Let $A$ be a densely defined closed linear operator in a Banach space $(X,\|\cdot\|)$ and $I$ be an interval of the real axis $\R$. A strongly continuous vector function $y:I\rightarrow X$ is called a {\it weak solution} of
the evolution equation 
\begin{equation}\label{2}
y'(t)=Ay(t),\ t\in I,
\end{equation}
if, for any $g^* \in D(A^*)$,
\begin{equation*}
\dfrac{d}{dt}\langle y(t),g^*\rangle = \langle y(t),A^*g^* \rangle,\ t\in I,
\end{equation*}
where $D(\cdot)$ is the \textit{domain} of an operator, $A^*$ is the operator {\it adjoint} to $A$, and $\langle\cdot,\cdot\rangle$ is the {\it pairing} between
the space $X$ and its dual $X^*$ (cf. \cite{Ball}).
\end{defn}

\begin{rems}\label{remsws}\
\begin{itemize}
\item Due to the \textit{closedness} of $A$, a weak solution of equation \eqref{2} can be equivalently defined to be a strongly continuous vector function $y:I\mapsto X$ such that,
for all $t\in I$,
\begin{equation*}
\int_{t_0}^ty(s)\,ds\in D(A)\quad \text{and} \quad y(t)=y(t_0)+A\int_{t_0}^ty(s)\,ds,
\end{equation*}
where $t_0$ is an arbitrary fixed point of the interval $I$, and is also called a \textit{mild solution} (cf. {\cite[Ch. II, Definition 6.3]{Engel-Nagel}}, see also {\cite[Preliminaries]{Markin2018(2)}}).
\item Such a notion of \textit{weak solution}, which need not be differentiable in the strong sense, generalizes that of \textit{classical} one, strongly differentiable on $I$ and satisfying the equation in the traditional plug-in sense, the classical solutions being precisely the weak ones strongly differentiable on $I$.
\item As is easily seen $y:\R\to X$ is a weak solution of equation \eqref{1} \textit{iff} 
\begin{enumerate}[label=(\roman*)]
\item 
\[
y_+(t):=y(t),\ t\ge 0,
\]
is a weak solution of equation \eqref{+} and 
\[
y_-(t):=y(-t),\ t\ge 0,
\]
is a weak solution  of the equation
\begin{equation}\label{-}
y'(t)=-Ay(t),\ t\ge 0,
\end{equation}
or
\item \[
y_-(t):=y(-t),\ t\in \R,
\]
is a weak solution  of the equation
\begin{equation*}
y'(t)=-Ay(t),\ t\in \R,
\end{equation*}
\end{enumerate} 
\item When a closed densely defined linear operator $A$
in a complex Banach space $X$ generates a strongly continuous group $\left\{T(t) \right\}_{t\in \R}$ of  bounded linear operators (see, e.g., \cite{Hille-Phillips,Engel-Nagel}), i.e., the associated \textit{abstract Cauchy problem} (\textit{ACP})
\begin{equation}\label{ACP}
\begin{cases}
y'(t)=Ay(t),\ t\in \R,\\
y(0)=f
\end{cases}
\end{equation}
is \textit{well posed} (cf. {\cite[Ch. II, Definition 6.8]{Engel-Nagel}}), the weak solutions of equation \eqref{1} are the orbits
\begin{equation}\label{group}
y(t)=T(t)f,\ t\in \R,
\end{equation}
with $f\in X$ (cf. {\cite[Ch. II, Proposition 6.4]{Engel-Nagel}}, see also {\cite[Theorem]{Ball}}), whereas the classical ones are those with $f\in D(A)$
(see, e.g., {\cite[Ch. II, Proposition 6.3]{Engel-Nagel}}). 
\end{itemize} 
\end{rems} 

\subsection{Scalar Type Spectral Operators}\ 

Henceforth, unless specified otherwise, $A$ is a {\it scalar type spectral operator} in a complex Banach space $(X,\|\cdot\|)$ with strongly $\sigma$-additive \textit{spectral measure} (the \textit{resolution of the identity}) $E_A(\cdot)$ assigning to Borel sets of the complex plane $\C$ bounded projection operators on $X$ and having the operator's \textit{spectrum} $\sigma(A)$ as its {\it support} \cite{Dunford1954,Survey58,Dun-SchIII}.

Observe that, in a complex Hilbert space, the scalar type spectral operators are precisely all those that are similar to the {\it normal} ones \cite{Dun-SchII,Plesner,Wermer}.

Associated with a scalar type spectral operator $A$ is the {\it Borel operational calculus} assigning to each complex-valued Borel measurable function
$F:\sigma(A)\to \C$ a scalar type spectral operator
\[
F(A):=\int\limits_{\sigma(A)} F(\lambda)\,dE_A(\lambda)
\]
\cite{Dun-SchIII}. In particular,
\begin{equation}\label{exp}
A^n=\int\limits_{\sigma(A)} \lambda^n\,dE_A(\lambda),\ n\in\Z_+,\quad\text{and}\quad
e^{tA}:=\int\limits_{\sigma(A)} e^{t\lambda}\,dE_A(\lambda),\ t\in \R,
\end{equation}
($\Z_+:=\left\{0,1,2,\dots\right\}$ is the set of nonnegative integers, $A^0:=I$, $I$ is the \textit{identity operator} on $X$).

Provided
\[
\sigma(A)\subseteq \left\{\lambda\in\C\,\middle|\, \Rep\lambda\le \omega\right\}
\] 
with some $\omega\in \R$, the collection 
of exponentials $\left\{e^{tA}\right\}_{t\ge 0}$
is the $C_0$-\textit{se\-migroup} generated by $A$ {\cite[Proposition $3.1$]{Markin2002(2)}} (cf. also \cite{Berkson1966,Panchapagesan1969}), and hence, if
\[
\sigma(A)\subseteq \left\{\lambda\in\C\,\middle|\, -\omega\le \Rep\lambda\le \omega\right\}
\] 
with some $\omega\ge 0$, the collection of exponentials
$\left\{e^{tA}\right\}_{t\in\R}$ 
is the \textit{strongly continuous group} of bounded linear operators generated by $A$.

Being strongly $\sigma$-additive, the spectral measure is bounded, i.e., there exists an $M\ge 1$ such that, for any Borel set $\delta\subseteq \C$,
\begin{equation}\label{bounded}
\|E_A(\delta)\|\le M
\end{equation}
\cite{Dun-SchI,Dun-SchIII}.

\begin{rem}
The notation $\|\cdot\|$ is used here to designate the norm on the space $L(X)$ of all bounded linear operators on $X$. Henceforth, we adhere to this rather conventional economy of symbols adopting the same notation also for the norm on the dual space $X^*$.
\end{rem} 

For arbitrary Borel measurable function $F:\C\to \C$, $f\in D(F(A))$, $g^*\in X^*$, and Borel set $\delta\subseteq \C$,
\begin{equation}\label{cond(ii)}
\int\limits_\delta|F(\lambda)|\,dv(f,g^*,\lambda)
\le 4M\|E_A(\delta)F(A)f\|\|g^*\|,
\end{equation}
where $v(f,g^*,\cdot)$ is the \textit{total variation measure} of the complex-valued Borel measure $\langle E_A(\cdot)f,g^* \rangle$, for which
\begin{equation}\label{tv}
v(f,g^*,\C)=v(f,g^*,\sigma(A))\le 4M\|f\|\|g^*\|,
\end{equation}
where $M\ge 1$ in \eqref{cond(ii)} and \eqref{tv}
is from \eqref{bounded} (see, e.g., \cite{Markin2004(1),Markin2004(2)}). 

In particular, for $\delta=\sigma(A)$, $E_A(\delta)=I$ (see, e.g., \cite{Dun-SchIII}), \eqref{cond(ii)} turns into
\begin{equation}\label{cond(i)}
\int\limits_{\sigma(A)}|F(\lambda)|\,d v(f,g^*,\lambda)\le 4M\|F(A)f\|\|g^*\|.
\end{equation}

Further (see, e.g., \cite{Markin2018(3),Markin2018(4)}), for arbitrary Borel measurable function $F:\C\to [0,\infty)$, Borel set $\delta\subseteq \C$, sequence $\left\{\Delta_n\right\}_{n=1}^\infty$ 
of pairwise disjoint Borel sets in $\C$, $f\in X$, and $g^*\in X^*$,
\begin{equation}\label{decompose}
\int\limits_{\delta}F(\lambda)\,dv(E_A(\cup_{n=1}^\infty \Delta_n)f,g^*,\lambda)
=\sum_{n=1}^\infty \int\limits_{\delta\cap\Delta_n}F(\lambda)\,dv(E_A(\Delta_n)f,g^*,\lambda).
\end{equation}

\begin{rem}
Subsequently, the frequently used term {\it ``spectral measure"} is abbreviated to {\it s.m.}.
\end{rem} 

The following statement characterizing the domains of Borel measurable functions of a scalar type spectral operator in terms of Borel measures is fundamental for our discourse.

\begin{prop}[{\cite[Proposition $3.1$]{Markin2002(1)}}]\label{prop}\ \\
Let $A$ be a scalar type spectral operator in a complex Banach space $(X,\|\cdot\|)$ with spectral measure $E_A(\cdot)$ and $F:\sigma(A)\to \C$ be a Borel measurable function. Then $f\in D(F(A))$ iff
\begin{enumerate}[label={(\roman*)}]
\item for each $g^*\in X^*$, 
$\displaystyle \int\limits_{\sigma(A)} |F(\lambda)|\,d v(f,g^*,\lambda)<\infty$ and
\item $\displaystyle \sup_{\{g^*\in X^*\,|\,\|g^*\|=1\}}
\int\limits_{\{\lambda\in\sigma(A)\,|\,|F(\lambda)|>n\}}
|F(\lambda)|\,dv(f,g^*,\lambda)\to 0,\ n\to\infty$,
\end{enumerate}
where $v(f,g^*,\cdot)$ is the total variation measure of $\langle E_A(\cdot)f,g^* \rangle$.
\end{prop}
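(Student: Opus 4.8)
The plan is to characterize membership in $D(F(A))$ through the norm convergence of the truncations $F_n:=F\cdot\mathbf{1}_{\sigma_n}$, where $\sigma_n:=\{\lambda\in\sigma(A)\mid |F(\lambda)|\le n\}$; each $F_n$ is bounded, so $F_n(A)=F(A)E_A(\sigma_n)$ is an everywhere defined bounded operator on $X$. Throughout I would lean on three facts: the Hahn--Banach duality formula $\|x\|=\sup_{\|g^*\|=1}|\langle x,g^*\rangle|$; the integral estimates \eqref{cond(ii)} and \eqref{cond(i)}, which tie the scalar total variation integrals to operator norms; and the strong $\sigma$-additivity (hence continuity from above at $\emptyset$) of the s.m.\ $E_A(\cdot)$.

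For necessity, assume $f\in D(F(A))$. Condition (i) is then immediate from \eqref{cond(i)}, since $\int_{\sigma(A)}|F|\,dv(f,g^*,\cdot)\le 4M\|F(A)f\|\|g^*\|<\infty$ for every $g^*\in X^*$. For condition (ii) I would apply \eqref{cond(ii)} with $\delta=\{\lambda\in\sigma(A)\mid |F(\lambda)|>n\}$ to obtain, uniformly over the unit ball,
\[
\sup_{\|g^*\|=1}\int\limits_{\{|F|>n\}}|F|\,dv(f,g^*,\cdot)\le 4M\,\bigl\|E_A(\{|F|>n\})F(A)f\bigr\|.
\]
Since $F$ is $\C$-valued, the sets $\{|F|>n\}$ decrease to $\emptyset$, so continuity from above of $E_A(\cdot)$ forces $E_A(\{|F|>n\})F(A)f\to 0$ in $X$, and the supremum vanishes in the limit.

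For sufficiency, assume (i) and (ii); the aim is to show $\{F_n(A)f\}$ is Cauchy. For $m>n$ the difference $F_m(A)f-F_n(A)f=(F\cdot\mathbf{1}_{\{n<|F|\le m\}})(A)f$ is produced by a bounded function, so $\langle F_m(A)f-F_n(A)f,g^*\rangle=\int_{\{n<|F|\le m\}}F\,d\langle E_A(\cdot)f,g^*\rangle$ for each $g^*$, whence
\[
\|F_m(A)f-F_n(A)f\|=\sup_{\|g^*\|=1}\bigl|\langle F_m(A)f-F_n(A)f,g^*\rangle\bigr|\le \sup_{\|g^*\|=1}\int\limits_{\{|F|>n\}}|F|\,dv(f,g^*,\cdot).
\]
By (ii) the right-hand side tends to $0$, so $F_n(A)f$ converges to some $y\in X$. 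Setting $f_n:=E_A(\sigma_n)f$, I would note $f_n\to f$ (as $\sigma_n\uparrow\sigma(A)$ and $E_A$ is strongly $\sigma$-additive), that each $f_n\in D(F(A))$ with $F(A)f_n=F_n(A)f\to y$, and finally invoke the closedness of the scalar type spectral operator $F(A)$ to conclude $f\in D(F(A))$ and $F(A)f=y$.

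I expect the delicate point to lie in the sufficiency direction: the upgrade from the merely scalar (weak) integrability recorded in (i) to genuine strong convergence of the truncations. It is precisely the uniform tail control (ii), funneled through the Hahn--Banach duality, that supplies the Cauchy estimate, while the identification of the limit with $F(A)f$ rests on the closedness of $F(A)$; condition (i) is needed throughout to guarantee that the scalar integrals defining the pairings are finite, so that the truncation integrals are unambiguous.
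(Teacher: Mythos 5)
Your argument is correct and is essentially the standard proof of this characterization: necessity via the estimates \eqref{cond(i)} and \eqref{cond(ii)} together with the strong continuity of the spectral measure on the decreasing sets $\{|F|>n\}\downarrow\emptyset$, and sufficiency via the Cauchy estimate for the truncations $F(A)E_A(\{|F|\le n\})f$ obtained from the uniform tail condition (ii) through Hahn--Banach duality, followed by the closedness of $F(A)$. Note that the paper itself offers no proof to compare against --- the proposition is imported verbatim from \cite[Proposition 3.1]{Markin2002(1)} --- but your route matches the one used there, and no step of yours fails.
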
 

The succeeding key theorem provides a description of the weak solutions of equation \eqref{1} with a scalar type spectral operator $A$ in a complex Banach space.

\begin{thm}[{\cite[Theorem $7$]{Markin2018(3)}}]\label{GWS}\ \\
Let $A$ be a scalar type spectral operator in a complex Banach space $(X,\|\cdot\|)$. A vector function $y:\R \to X$ is a weak solution 
of equation \eqref{1} iff there exists an $\displaystyle f \in \bigcap_{t\in \R}D(e^{tA})$ such that
\begin{equation}\label{expf}
y(t)=e^{tA}f,\ t\in \R,
\end{equation}
the operator exponentials understood in the sense of the Borel operational calculus (see \eqref{exp}).
\end{thm}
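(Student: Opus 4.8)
The plan is to prove both implications entirely within the Borel operational calculus \eqref{exp}, converting every assertion about $e^{tA}$ into one about the scalar integrals $\int_{\sigma(A)}e^{t\lambda}\,d\langle E_A(\lambda)f,g^*\rangle$ and controlling these through the total-variation estimates \eqref{cond(ii)}, \eqref{cond(i)}, \eqref{tv}, the uniform bound \eqref{bounded}, and the domain characterization of Proposition \ref{prop}. A recurring device is the \emph{measure identity} $d\langle E_A(\lambda)f,A^*g^*\rangle=\lambda\,d\langle E_A(\lambda)f,g^*\rangle$, valid for $g^*\in D(A^*)$: it is obtained by testing on \emph{bounded} Borel sets $\delta$, where $E_A(\delta)f\in D(A)$ yields $\langle E_A(\delta)f,A^*g^*\rangle=\langle AE_A(\delta)f,g^*\rangle=\int_\delta\lambda\,d\langle E_A(\lambda)f,g^*\rangle$, whence the two complex measures coincide and, in particular, $dv(f,A^*g^*,\lambda)=|\lambda|\,dv(f,g^*,\lambda)$.

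For \emph{sufficiency}, suppose $f\in\bigcap_{t\in\R}D(e^{tA})$ and set $y(t):=e^{tA}f$. To verify the weak equation, fix $g^*\in D(A^*)$ and differentiate $\langle y(t),g^*\rangle=\int_{\sigma(A)}e^{t\lambda}\,d\langle E_A(\lambda)f,g^*\rangle$ under the integral sign: the difference quotients converge pointwise to $\lambda e^{t\lambda}$ and, for $|h|\le 1$, are dominated by $|\lambda|\big(e^{(t+1)\Rep\lambda}+e^{(t-1)\Rep\lambda}\big)$, which is integrable against $dv(f,g^*,\cdot)$ precisely because $\int e^{(t\pm1)\Rep\lambda}|\lambda|\,dv(f,g^*,\lambda)=\int e^{(t\pm1)\Rep\lambda}\,dv(f,A^*g^*,\lambda)<\infty$ by the measure identity and $f\in D(e^{(t\pm1)A})$ applied to the functional $A^*g^*$. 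Dominated convergence then gives $\frac{d}{dt}\langle y(t),g^*\rangle=\int_{\sigma(A)}\lambda e^{t\lambda}\,d\langle E_A(\lambda)f,g^*\rangle$, and the same identity identifies the right-hand side with $\langle y(t),A^*g^*\rangle$. Strong continuity of $y$ I would establish by splitting $\sigma(A)$ into a bounded band $\Delta_N=\{|\lambda|\le N\}$, on which $t\mapsto E_A(\Delta_N)y(t)=e^{tA_N}E_A(\Delta_N)f$ is continuous since $A_N:=AE_A(\Delta_N)$ is bounded, and a tail $\C\setminus\Delta_N$, whose contribution is made uniformly small in $s$ near $t$ by condition (ii) of Proposition \ref{prop} for $f\in D(e^{tA})$.

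For \emph{necessity}, let $y$ be a weak solution and put $f:=y(0)$. Applying the bounded projection $E_A(\Delta_n)$ to the integrated form $y(t)=y(0)+A\int_{0}^{t}y(s)\,ds$ of Remarks \ref{remsws}, and using $E_A(\Delta_n)A\subseteq AE_A(\Delta_n)=A_n$, gives $z_n(t):=E_A(\Delta_n)y(t)=z_n(0)+A_n\int_0^t z_n(s)\,ds$; since $A_n$ is bounded this forces
\[
z_n(t)=e^{tA_n}E_A(\Delta_n)f=\left(\int_{\Delta_n}e^{t\lambda}\,dE_A(\lambda)\right)f .
\]
I would then verify $f\in D(e^{tA})$ via Proposition \ref{prop} with $F(\lambda)=e^{t\lambda}$, applying \eqref{cond(i)}, \eqref{cond(ii)} to the \emph{bounded} functions $e^{t\lambda}\mathbf 1_{\Delta_n}$, whose operational-calculus image is the bounded operator $z_n(t)$, and letting $n\to\infty$: monotone convergence on the left and $z_n(t)\to y(t)$ with \eqref{bounded} on the right yield, for condition (i), $\int_{\sigma(A)}e^{t\Rep\lambda}\,dv(f,g^*,\lambda)\le 4M^2\|y(t)\|\,\|g^*\|<\infty$, and, with $\delta_m:=\{\lambda\in\sigma(A)\mid e^{t\Rep\lambda}>m\}$, for condition (ii),
\[
\sup_{\|g^*\|=1}\int_{\delta_m}e^{t\Rep\lambda}\,dv(f,g^*,\lambda)\le 4M\,\|E_A(\delta_m)y(t)\|\to 0,\quad m\to\infty,
\]
because $\delta_m\downarrow\emptyset$ and the s.m. is continuous from above. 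Hence $f\in\bigcap_{t\in\R}D(e^{tA})$, and passing to the limit in $z_n(t)=\big(\int_{\Delta_n}e^{t\lambda}\,dE_A(\lambda)\big)f$ (which converges strongly to $e^{tA}f$) against $z_n(t)\to y(t)$ gives $y(t)=e^{tA}f$.

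I expect the main obstacle, in both directions, to be the \emph{uniform-in-$g^*$} control of the spectral tails of $\int|e^{t\lambda}|\,dv(f,g^*,\lambda)$, i.e. condition (ii) of Proposition \ref{prop}: it is what upgrades weak continuity to genuine strong continuity in the sufficiency part and what secures the membership $y(0)\in D(e^{tA})$ in the necessity part. The estimates \eqref{cond(ii)} and \eqref{cond(i)}, which bound these total-variation integrals by operator norms of $E_A(\delta)F(A)f$, are exactly the tool that renders this uniform control accessible, and \eqref{decompose} is available should the tail sums require a countably additive bookkeeping.
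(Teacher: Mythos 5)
The paper does not actually prove this statement: Theorem \ref{GWS} is imported verbatim from \cite[Theorem 7]{Markin2018(3)}, so there is no in-paper proof to compare against. Judged on its own terms, your reconstruction is essentially correct and is built from exactly the toolkit this paper works with. The measure identity $d\langle E_A(\cdot)f,A^*g^*\rangle=\lambda\,d\langle E_A(\cdot)f,g^*\rangle$ is legitimately obtained by testing on bounded Borel sets (where $E_A(\delta)X\subseteq D(A)$ and $AE_A(\delta)=\int_\delta\lambda\,dE_A(\lambda)$) and extending by countable additivity; it correctly feeds both the dominated-convergence differentiation in the sufficiency direction and the identification of the derivative with $\langle y(t),A^*g^*\rangle$. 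The necessity direction --- projecting the mild form of the equation onto bounded spectral patches $\Delta_n$, solving the resulting bounded integral equation to get $E_A(\Delta_n)y(t)=\int_{\Delta_n}e^{t\lambda}\,dE_A(\lambda)\,y(0)$, and then passing to the limit through Proposition \ref{prop} with the truncated symbols $e^{t\lambda}\chi_{\Delta_n}$ --- is sound; the bound $\le 4M^2\|E_A(\delta_m)y(t)\|$ together with $\delta_m\downarrow\emptyset$ does deliver condition (ii). The one place where your sketch is thinner than it should be is the uniform tail estimate for strong continuity in the sufficiency part: condition (ii) of Proposition \ref{prop} controls $\sup_{\|g^*\|=1}\int_{\{|F|>n\}}$, not $\sup_{\|g^*\|=1}\int_{\{|\lambda|>N\}}$, so you need the extra splitting $\{|\lambda|>N\}\subseteq\{e^{s\Rep\lambda}>n\}\cup\bigl(\{|\lambda|>N\}\cap\{e^{s\Rep\lambda}\le n\}\bigr)$, handling the second piece by $n\,v(f,g^*,\{|\lambda|>N\})\le 4Mn\|E_A(\{|\lambda|>N\})f\|\to 0$; this is routine but not automatic from what you wrote. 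Finally, note that the route most consistent with the paper's own practice (cf.\ the proof of Proposition \ref{particular}) would be shorter: by Remarks \ref{remsws}, $y$ is a weak solution on $\R$ iff $y_+(t)=y(t)$ and $y_-(t)=y(-t)$ are weak solutions of \eqref{+} and \eqref{-} on $[0,\infty)$, and the half-line representation theorem \cite[Theorem 4.2]{Markin2002(1)} applied to $A$ and $-A$ immediately yields \eqref{expf}. Your direct argument buys self-containedness at the cost of redoing that half-line work.
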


We also need the following characterization of a particular weak solution's of equation \eqref{1} with a scalar type spectral operator $A$ in a complex Banach space being strongly infinite differentiable on a subinterval $I$ of $\R$.

\begin{prop}[{\cite[Corollary $11$]{Markin2018(3)}}]\label{Cor}\ \\
Let $A$ be a scalar type spectral operator in a complex Banach space $(X,\|\cdot\|)$ and $I$ be interval of the real axis $\R$. A weak solution $y(\cdot)$ of equation \eqref{1} is strongly infinite differentiable on $I$ ($y(\cdot)\in C^\infty(I,X)$) iff, for each $t\in I$, 
\begin{equation*}
y(t) \in C^\infty(A):=\bigcap_{n=0}^{\infty}D(A^n),
\end{equation*}
in which case
\begin{equation*}
y^{(n)}(t)=A^ny(t),\ n\in \N,t\in I.
\end{equation*}
\end{prop}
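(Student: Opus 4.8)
The plan is to reduce everything to a single first-order mechanism and then bootstrap. By Theorem~\ref{GWS} the weak solution has the form $y(t)=e^{tA}f$ for some $f\in\bigcap_{t\in\R}D(e^{tA})$, and by the integral (mild) reformulation in Remarks~\ref{remsws} it satisfies $y(t)=y(t_0)+A\int_{t_0}^{t}y(s)\,ds$ for all $t_0,t\in I$. The two workhorses are: (a) the \emph{closedness} of $A$ (and of each $A^n$), which lets me pass to limits in difference quotients; and (b) the commutation of a closed operator with a Bochner integral, i.e.\ that if $y(s)\in D(A)$ on $I$ with $Ay(\cdot)$ strongly continuous, then $\int_{t_0}^{t}y(s)\,ds\in D(A)$ and $A\int_{t_0}^{t}y(s)\,ds=\int_{t_0}^{t}Ay(s)\,ds$. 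The core first-order observation is that if a weak solution is differentiable at $t_0$, then writing $\frac{y(t_0+h)-y(t_0)}{h}=A\bigl(\frac1h\int_{t_0}^{t_0+h}y(s)\,ds\bigr)$, letting $h\to0$, and invoking closedness gives $y(t_0)\in D(A)$ and $y'(t_0)=Ay(t_0)$.

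For the direction ($\Rightarrow$), assume $y\in C^\infty(I,X)$; here continuity of every $A^ky=y^{(k)}$ comes for free, so no uniform estimates are needed. I would induct on $k$ with the hypotheses that $y(t)\in D(A^{k})$, that $y^{(k)}(t)=A^{k}y(t)$, and that $y^{(k)}$ obeys the mild form $y^{(k)}(t)=y^{(k)}(t_0)+A\int_{t_0}^{t}y^{(k)}(s)\,ds$. The base case $k=0$ is the hypothesis that $y$ is a weak solution. For the step, since $y^{(k)}$ is differentiable and satisfies the mild form, the difference-quotient argument above yields $y^{(k)}(t)\in D(A)$ and $y^{(k+1)}(t)=Ay^{(k)}(t)$, i.e.\ $y(t)\in D(A^{k+1})$ and $y^{(k+1)}(t)=A^{k+1}y(t)$; applying $A$ to the mild form of $y^{(k)}$ and commuting it past the integral (legitimate because $A^{k}y(\cdot)=y^{(k)}(\cdot)$ is continuous) then produces the mild form for $y^{(k+1)}$, closing the induction. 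Thus $y(t)\in\bigcap_n D(A^n)=C^\infty(A)$ and $y^{(n)}(t)=A^ny(t)$.

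For the direction ($\Leftarrow$), assume $y(t)\in C^\infty(A)$ for every $t\in I$. I would show that $z:=Ay$ is again a weak solution on $I$. Granting for the moment the strong continuity of $Ay(\cdot)$, the commutation property gives $A\int_{t_0}^{t}y(s)\,ds=\int_{t_0}^{t}Ay(s)\,ds=\int_{t_0}^{t}z(s)\,ds$, so the mild form for $y$ becomes $y(t)=y(t_0)+\int_{t_0}^{t}z(s)\,ds$; in particular $y\in C^1(I,X)$ with $y'=z=Ay$. Applying $A$ to $y(t)-y(t_0)=\int_{t_0}^{t}z(s)\,ds$ (both endpoints lie in $D(A)$) yields $z(t)-z(t_0)=A\int_{t_0}^{t}z(s)\,ds$, which is exactly the mild form for $z$; since $z$ is strongly continuous, $z$ is a weak solution. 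Because $y(t)\in\bigcap_n D(A^n)$ forces $z(t)=Ay(t)\in\bigcap_n D(A^n)=C^\infty(A)$, I may feed $z$ back into the same argument and induct, obtaining $z\in C^\infty(I,X)$ with $z^{(n)}=A^nz=A^{n+1}y$; hence $y'=z\in C^\infty$, so $y\in C^\infty(I,X)$ and $y^{(n)}=A^ny$.

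The hard part is the one ingredient used above on credit: the strong continuity on $I$ of $t\mapsto A^ny(t)=\int_{\sigma(A)}\lambda^ne^{t\lambda}\,dE_A(\lambda)f$, which (unlike in the ($\Rightarrow$) direction) is not automatic. To prove $\|A^n e^{tA}f-A^ne^{t_0A}f\|\to0$ as $t\to t_0$ I would estimate, for $\|g^*\|=1$, $|\langle A^n(e^{tA}-e^{t_0A})f,g^*\rangle|\le\int_{\sigma(A)}|\lambda|^n|e^{t\lambda}-e^{t_0\lambda}|\,dv(f,g^*,\lambda)$ and take the supremum over $g^*$, so the entire difficulty is obtaining smallness \emph{uniformly} in $g^*$. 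I would fix a neighborhood $[t_0-\eps,t_0+\eps]\subseteq I$ (one-sided at an endpoint) and split $\sigma(A)$ into $\{|\lambda|\le R\}$ and $\{|\lambda|>R\}$. On the bounded part the integrand tends to $0$ uniformly in $\lambda$ as $t\to t_0$, and the total mass bound $v(f,g^*,\sigma(A))\le4M\|f\|\|g^*\|$ from \eqref{tv} makes this contribution small uniformly in $g^*$. On the tail, dominating $|\lambda|^n|e^{t\lambda}|$ for $t\in[t_0-\eps,t_0+\eps]$ by $|\lambda|^n\bigl(e^{(t_0+\eps)\Rep\lambda}+e^{(t_0-\eps)\Rep\lambda}\bigr)$ and invoking \eqref{cond(ii)}, I bound the tail by $4M\bigl(\|E_A(\{|\lambda|>R\})A^ne^{(t_0+\eps)A}f\|+\|E_A(\{|\lambda|>R\})A^ne^{(t_0-\eps)A}f\|\bigr)$, which tends to $0$ as $R\to\infty$, uniformly in $\|g^*\|=1$, by the strong $\sigma$-additivity of the spectral measure (the vectors $A^ne^{(t_0\pm\eps)A}f$ exist precisely because $y(t_0\pm\eps)\in D(A^n)$). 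This uniform tail control, which is the exact content of condition~(ii) of Proposition~\ref{prop}, is where the argument genuinely has to work, and it is what the whole ($\Leftarrow$) direction rests on.
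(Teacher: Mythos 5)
The paper does not actually prove Proposition~\ref{Cor}; it imports it verbatim, with a citation, from \cite[Corollary~11]{Markin2018(3)}, so the only in-paper material to compare against is the analogous spectral-calculus arguments (e.g., the proof of (iii)$\Rightarrow$(i) in Theorem~\ref{real}). Your proof is correct, and it takes a genuinely different route from the one used in that source and echoed throughout this paper.

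The source-style argument works entirely inside the Borel operational calculus: it establishes $n$-fold strong differentiability of $e^{tA}f$ by estimating $\bigl\|\bigl[h^{-1}(e^{(t+h)A}-e^{tA})-Ae^{tA}\bigr]f\bigr\|$ through integrals of $\bigl|h^{-1}(e^{(t+h)\lambda}-e^{t\lambda})-\lambda e^{t\lambda}\bigr|$ against $v(f,g^*,\cdot)$, using Proposition~\ref{prop} and \eqref{cond(ii)} at every order. You instead quarantine all the spectral-measure work into a single lemma --- strong continuity of $t\mapsto A^ne^{tA}f$ on $I$ --- and run the differentiability itself as a classical closed-operator bootstrap: difference quotients of the integrated (mild) equation plus Hille's theorem on commuting a closed operator with a Bochner integral. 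This buys a cleaner, more symmetric pair of inductions in which scalar type spectrality enters only once, in the uniform-in-$g^*$ tail estimate via \eqref{cond(ii)} and the strong $\sigma$-additivity of $E_A(\cdot)$; that tail estimate is sound, including the domination of $e^{t\Rep\lambda}$ on $[t_0-\eps,t_0+\eps]$ by $e^{(t_0-\eps)\Rep\lambda}+e^{(t_0+\eps)\Rep\lambda}$ and the use of $y(t_0\pm\eps)\in D(A^n)$ to make sense of $A^ne^{(t_0\pm\eps)A}f$. The price is that you lean silently on two standard facts that a referee would want named: Hille's theorem for closed operators, and the operational-calculus product rule $D(A\,A^k)=D(A^k)\cap D(A^{k+1})$ (see \cite[Theorem XVIII.2.11(f)]{Dun-SchIII}), which is what legitimately converts ``$y^{(k)}(t)=A^ky(t)\in D(A)$'' into ``$y(t)\in D(A^{k+1})$ with $A^{k+1}y(t)=A\bigl(A^ky(t)\bigr)$'' when $A^{k+1}$ is defined spectrally rather than as a composition. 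With those two citations added, the argument is complete.
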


\subsection{Gevrey Classes of Functions}\

\begin{defn}[Gevrey Classes of Functions]\ \\
Let $(X,\|\cdot\|)$ be a (real or complex) Banach space, $C^\infty(I,X)$ be the space of all $X$-valued functions strongly infinite differentiable on an interval $I$ of the real axis $\R$, and $0\le \beta<\infty$.

The subspaces
\[
\begin{split}
{\mathscr E}^{\{\beta\}}(I,X):=\bigl\{g(\cdot)\in C^{\infty}(I, X) \bigm |&
\forall [a,b] \subseteq I\ \exists \alpha>0\ \exists c>0:
\\
&\max_{a \le t \le b}\|g^{(n)}(t)\| \le c\alpha^n {(n!)}^\beta,
\ n\in\Z_+\bigr\}\\
\intertext{and}
{\mathscr E}^{(\beta)}(I,X):= \bigl\{g(\cdot) \in C^{\infty}(I,X) \bigm |& 
\forall [a,b] \subseteq I\ \forall \alpha > 0 \ \exists c>0:
\\
&\max_{a \le t \le b}\|g^{(n)}(t)\| \le c\alpha^n {(n!)}^\beta,
\ n\in\Z_+\bigr\}
\end{split}
\]
of $C^\infty(I,X)$ are called the {\it $\beta$th-order Gevrey classes} of strongly ultradifferentiable vector functions on $I$ of {\it Roumieu} and {\it Beurling type}, respectively (see, e.g., \cite{Gevrey,Komatsu1,Komatsu2,Komatsu3}).
\end{defn}

\begin{rems}\
\begin{itemize}
\item In view of {\it Stirling's formula}, the 
sequence $\left\{{(n!)}^\beta\right\}_{n=0}^\infty$ can be replaced with $\left\{ n^{\beta n}\right\}_{n=0}^\infty$.
\item For $0\le\beta<\beta'<\infty$, the inclusions
\begin{equation*}
{\mathscr E}^{(\beta)}(I,X)\subseteq{\mathscr E}^{\{\beta\}}(I,X)
\subseteq {\mathscr E}^{(\beta')}(I,X)\subseteq
{\mathscr E}^{\{\beta'\}}(I,X)\subseteq C^{\infty}(I,X)
\end{equation*}
hold.
\item For $1<\beta<\infty$, the Gevrey classes
${\mathscr E}^{(\beta)}(I,X)$ and ${\mathscr E}^{\{\beta\}}(I,X)$ are \textit{non-quasianalytic} (see, e.g., \cite{Komatsu2}).
\item The first-order Roumieu-type Gevrey class ${\mathscr E}^{\{1\}}(I,X)$ consists of all {\it analytic} on $I$, i.e., {\it analytically continuable} into complex neighborhoods of $I$, vector functions and the first-order Beurling-type Gevrey class ${\mathscr E}^{(1)}(I,X)$ consists of all {\it entire}, i.e., allowing {\it entire} continuations, vector functions \cite{Mandel}.
\item For $0\le\beta<1$, the Roumieu-type Gevrey class ${\mathscr E}^{\{\beta\}}(I,X)$ (the Beurling-type Gevrey class ${\mathscr E}^{(\beta)}(I,X)$) consists of all functions $g(\cdot)\in {\mathscr E}^{(1)}(I,X)$ such that, for some (any) $\gamma>0$, there exists an $M>0$, for which
\begin{equation}\label{order}
\|g(z)\|\le Me^{\gamma|z|^{1/(1-\beta)}},\ z\in \C,
\end{equation}
\cite{Markin2001(2)}. In particular,
for $\beta=0$, the Gevrey classes ${\mathscr E}^{\{0\}}(I,X)$ and ${\mathscr E}^{(0)}(I,X)$ are the classes of entire vector functions of \textit{exponential} and \textit{minimal exponential type}, respectively (see, e.g., \cite{Levin}).
\end{itemize} 
\end{rems} 

\subsection{Gevrey Classes of Vectors}\

\begin{defn}[Gevrey Classes of Vectors]\ \\
Let $A$ be a densely defined closed linear operator in a (real or complex) Banach space $(X,\|\cdot\|)$ and $0\le \beta<\infty$.

The following subspaces 
\[
\begin{split}
{\mathscr E}^{\{\beta\}}(A)&:=\left\{f\in C^{\infty}(A)\, \middle |\, 
\exists \alpha>0\ \exists c>0:
\|A^nf\| \le c\alpha^n {(n!)}^\beta,\ n\in\Z_+ \right\}\\
\intertext{and}
{\mathscr E}^{(\beta)}(A)&:=\left\{f \in C^{\infty}(A)\, \middle|\,\forall \alpha > 0 \ \exists c>0:
\|A^nf\| \le c\alpha^n {(n!)}^\beta,\ n\in\Z_+ \right\}
\end{split}
\]
of $C^{\infty}(A)$ are called the \textit{$\beta$th-order Gevrey classes} of ultradifferentiable vectors of $A$ of \textit{Roumieu} and \textit{Beurling type}, respectively (see, e.g., \cite{GorV83,Gor-Knyaz,book}).
\end{defn}

\begin{rems}\label{remsGCV}\
\begin{itemize}
\item In view of {\it Stirling's formula}, the 
sequence $\left\{{(n!)}^\beta\right\}_{n=0}^\infty$ can be replaced with $\left\{ n^{\beta n}\right\}_{n=0}^\infty$.
\item For $0\le\beta<\beta'<\infty$, the inclusions
\begin{equation*}
{\mathscr E}^{(\beta)}(A)\subseteq{\mathscr E}^{\{\beta\}}(A)
\subseteq {\mathscr E}^{(\beta')}(A)\subseteq
{\mathscr E}^{\{\beta'\}}(A)\subseteq C^{\infty}(A)
\end{equation*}
hold.
\item In particular, ${\mathscr E}^{\{1\}}(A)$ and ${\mathscr E}^{(1)}(A)$ are the classes of {\it analytic} and {\it entire} vectors of $A$, respectively \cite{Goodman,Nelson} and ${\mathscr E}^{\{0\}}(A)$ and ${\mathscr E}^{(0)}(A)$ are the classes of \textit{entire} vectors of $A$ of \textit{exponential} and \textit{minimal exponential type}, respectively (see, e.g., \cite{Radyno1983(1),Gor-Knyaz}).
\item As is readily seen, in view of the \textit{closedness} of $A$, the first-order Beurling-type Gevrey class ${\mathscr E}^{(1)}(A)$ forms the subspace of the initial values $f\in X$ generating the (classical) solutions of \eqref{1}, which are entire vector functions represented by the power series
\begin{equation*}
\sum_{n=0}^\infty \dfrac{t^n}{n!}A^nf,\ t\in \R,
\end{equation*}
the classes ${\mathscr E}^{\{\beta\}}(A)$ and ${\mathscr E}^{(\beta)}(A)$ with $0\le\beta<1$ being the subspaces of such initial values for which the solutions satisfy growth estimate \eqref{order} with some (any) $\gamma>0$ and some $M=M(\gamma)>0$, respectively (cf. \cite{Levin}).
\end{itemize} 
\end{rems} 

As is shown in \cite{GorV83} (see also \cite{Gor-Knyaz,book}), for a {\it normal operator} $A$ in a complex Hilbert space and any $0<\beta<\infty$,
\begin{equation}\label{GC}
{\mathscr E}^{\{\beta\}}(A)=\bigcup_{t>0} D(e^{t|A|^{1/\beta}})\quad \text{and}\quad 
{\mathscr E}^{(\beta)}(A)=\bigcap_{t>0} D(e^{t|A|^{1/\beta}}),
\end{equation}
the operator exponentials $e^{t|A|^{1/\beta}}$, $t>0$, understood in the sense of the Borel operational calculus (see, e.g., \cite{Dun-SchII,Plesner}).

In \cite{Markin2004(2),Markin2015}, descriptions \eqref{GC} are extended  to \textit{scalar type spectral operators} in a complex Banach space. In \cite{Markin2015}, similar nature descriptions of the classes ${\mathscr E}^{\{0\}}(A)$ and ${\mathscr E}^{(0)}(A)$ ($\beta=0$), known for a normal operator $A$ in a complex Hilbert space (see, e.g., \cite{Gor-Knyaz}), are also generalized to scalar type spectral operators in a complex Banach space. In particular {\cite[Theorem $5.1$]{Markin2015}},
\[
{\mathscr E}^{\{0\}}(A)=\bigcup_{\alpha>0}E_A(\Delta_\alpha)X,
\] 
where
\begin{equation*}
\Delta_\alpha:=\left\{\lambda\in\C\,\middle|\,|\lambda|\le \alpha \right\},\ \alpha>0.
\end{equation*}

\subsection{Gevrey Ultradifferentiability 
of a Particular Weak Solution of \eqref{+}}

We also need the following characterization of a particular weak solution's of equation \eqref{1} with a scalar type spectral operator $A$ in a complex Banach space being strongly Gevrey ultradifferentiable on a subinterval $I$ of $[0,\infty)$.

\begin{prop}[{\cite[Proposition $3.1$]{Markin2018(4)}}]\label{particular+}\ \\
Let $A$ be a scalar type spectral operator in a complex Banach space $(X,\|\cdot\|)$, $0\le \beta<\infty$, and $I$ be a subinterval of $[0,\infty)$. The restriction of
a weak solution $y(\cdot)$ of equation \eqref{+} to $I$ belongs to the Gevrey class ${\mathscr E}^{\{\beta\}}(I,X)$
\textup{(${\mathscr E}^{(\beta)}(I,X)$)} iff, for each $t\in I$,
\begin{equation*}
y(t) \in {\mathscr E}^{\{\beta\}}(A)
\ \textup{(${\mathscr E}^{(\beta)}(A)$, respectively)},
\end{equation*}
in which case
\begin{equation*}
y^{(n)}(t)=A^ny(t),\ n\in \N,t\in I.
\end{equation*}
\end{prop}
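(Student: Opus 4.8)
The plan is to base both implications on the spectral representation $y(t)=e^{tA}f$, $t\ge 0$, with $f\in\bigcap_{t\ge 0}D(e^{tA})$, furnished by the half-line counterpart of Theorem \ref{GWS}, together with the upper estimate $\|F(A)f\|\le\sup_{\|g^*\|=1}\int_{\sigma(A)}|F(\lambda)|\,dv(f,g^*,\lambda)$, which is immediate from $\langle F(A)f,g^*\rangle=\int_{\sigma(A)}F(\lambda)\,d\langle E_A(\lambda)f,g^*\rangle$ and $\bigl|\int F\,d\mu\bigr|\le\int|F|\,dv$, and the complementary inequality \eqref{cond(i)}. Since ${\mathscr E}^{\{\beta\}}(A)\subseteq C^\infty(A)$ (resp. ${\mathscr E}^{(\beta)}(A)\subseteq C^\infty(A)$), the strong infinite differentiability of $y(\cdot)$ on $I$ and the formula $y^{(n)}(t)=A^ny(t)$, $n\in\N$, $t\in I$, will follow from the half-line version of Proposition \ref{Cor}, reducing the whole statement to matching the Gevrey growth of the sequence $\bigl\{\|A^ny(t)\|\bigr\}$ against that of $\bigl\{\max_{a\le t\le b}\|y^{(n)}(t)\|\bigr\}$ on each $[a,b]\subseteq I$.

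For necessity I would assume $y(\cdot)|_I\in{\mathscr E}^{\{\beta\}}(I,X)$; then $y(\cdot)|_I\in C^\infty(I,X)$, so $y(t)\in C^\infty(A)$ and $y^{(n)}(t)=A^ny(t)$ for each $t\in I$. Fixing $t\in I$ and choosing any $[a,b]\subseteq I$ containing $t$, the defining estimate of the function class yields $\alpha,c>0$ with $\|A^ny(t)\|=\|y^{(n)}(t)\|\le\max_{a\le s\le b}\|y^{(n)}(s)\|\le c\alpha^n(n!)^\beta$ for all $n\in\Z_+$, which is precisely $y(t)\in{\mathscr E}^{\{\beta\}}(A)$; the Beurling case is identical with $\alpha>0$ arbitrary.

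The substantive direction is sufficiency, where the obstacle is upgrading the pointwise vector estimates, whose constants a priori depend on $t$, to estimates uniform on each $[a,b]\subseteq I$. The key is that the $t$-dependence can be pushed entirely onto the endpoints. Fixing $0\le a\le b$ and $t\in[a,b]$, since $y(t)=e^{tA}f\in C^\infty(A)$ the multiplicativity of the Borel operational calculus gives $A^ny(t)=(\lambda^ne^{t\lambda})(A)f=\int_{\sigma(A)}\lambda^ne^{t\lambda}\,dE_A(\lambda)f$, whence, by the norm estimate above,
\[
\|A^ny(t)\|\le\sup_{\|g^*\|=1}\int_{\sigma(A)}|\lambda|^ne^{t\Rep\lambda}\,dv(f,g^*,\lambda).
\]
Splitting $\sigma(A)$ into $\{\Rep\lambda\ge 0\}$ and $\{\Rep\lambda<0\}$ and using $e^{t\Rep\lambda}\le e^{b\Rep\lambda}$ on the former and $e^{t\Rep\lambda}\le e^{a\Rep\lambda}$ on the latter (valid since $0\le a\le t\le b$), each integral is dominated by the corresponding integral over all of $\sigma(A)$, and \eqref{cond(i)} applied with $F(\lambda)=\lambda^ne^{b\lambda}$ and $F(\lambda)=\lambda^ne^{a\lambda}$ yields
\[
\|A^ny(t)\|\le 4M\bigl(\|A^ne^{bA}f\|+\|A^ne^{aA}f\|\bigr)=4M\bigl(\|A^ny(b)\|+\|A^ny(a)\|\bigr),\quad t\in[a,b].
\]

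Since $a,b\in I$, the hypothesis gives $y(a),y(b)\in{\mathscr E}^{\{\beta\}}(A)$, so there are $\alpha_a,\alpha_b,c_a,c_b>0$ with $\|A^ny(a)\|\le c_a\alpha_a^n(n!)^\beta$ and $\|A^ny(b)\|\le c_b\alpha_b^n(n!)^\beta$; taking $\alpha:=\max\{\alpha_a,\alpha_b\}$, $c:=4M(c_a+c_b)$, and recalling $y^{(n)}(t)=A^ny(t)$, one obtains $\max_{a\le t\le b}\|y^{(n)}(t)\|\le c\alpha^n(n!)^\beta$, i.e.\ $y(\cdot)|_I\in{\mathscr E}^{\{\beta\}}(I,X)$. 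For the Beurling class the same endpoint bound, invoked for each prescribed $\alpha>0$ (which both $y(a)$ and $y(b)$ accommodate), delivers a single $c=4M(c_a+c_b)$ and hence $y(\cdot)|_I\in{\mathscr E}^{(\beta)}(I,X)$. The only points requiring care are the justification of $A^ne^{tA}f=(\lambda^ne^{t\lambda})(A)f$ via the multiplicativity of the operational calculus once $e^{tA}f\in D(A^n)$, and the finiteness and splitting of the integrals, both routine given Proposition \ref{prop}.
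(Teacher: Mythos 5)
Your proposal is correct. Note, however, that this paper does not prove Proposition \ref{particular+} at all: it is imported verbatim from \cite[Proposition $3.1$]{Markin2018(4)} as a preliminary, so there is no in-paper proof to measure you against. Judged on its own merits, your argument is sound and is essentially the standard one in this series of papers: the necessity direction is the routine pointwise evaluation of the defining Gevrey estimate at a fixed $t$, and the substantive sufficiency direction hinges on your endpoint bound $\|A^ny(t)\|\le 4M\left(\|A^ny(a)\|+\|A^ny(b)\|\right)$ for $t\in[a,b]$, obtained by splitting $\sigma(A)$ according to the sign of $\Rep\lambda$, dominating $e^{t\Rep\lambda}$ by $e^{b\Rep\lambda}$ or $e^{a\Rep\lambda}$ accordingly, and invoking \eqref{cond(i)}; this is exactly the device that transfers the a priori $t$-dependent constants to the two endpoints and makes the estimate uniform on $[a,b]$, and the same spectral-splitting technique is what drives the proofs actually carried out in this paper (e.g., Theorem \ref{real}). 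The two points you flag as needing care --- that $A^ne^{tA}f=(\lambda^ne^{t\lambda})(A)f$ once $e^{tA}f\in C^\infty(A)$, and the applicability of \eqref{cond(i)} --- are indeed covered by the multiplicativity of the Borel operational calculus and Proposition \ref{prop}, so no gap remains.
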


\section{Gevrey Ultradifferentiability 
of a Particular Weak Solution}

\begin{prop}[Gevrey Ultradifferentiability of a Particular Weak Solution]\label{particular}\ \\
Let $A$ be a scalar type spectral operator in a complex Banach space $(X,\|\cdot\|)$, $0\le \beta<\infty$, and $I$ be an interval of the real axis $\R$. The restriction of
a weak solution $y(\cdot)$ of equation \eqref{1} to $I$ belongs to the Gevrey class ${\mathscr E}^{\{\beta\}}(I,X)$
\textup{(${\mathscr E}^{(\beta)}(I,X)$)} iff, for each 
$t\in I$,
\begin{equation*}
y(t) \in {\mathscr E}^{\{\beta\}}(A)
\ \textup{(${\mathscr E}^{(\beta)}(A)$, respectively)},
\end{equation*}
in which case
\begin{equation*}
y^{(n)}(t)=A^ny(t),\ n\in \N,t\in I.
\end{equation*}
\end{prop}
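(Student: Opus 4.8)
The plan is to reduce this two-sided ($I \subseteq \mathbb{R}$) statement to the already-established one-sided result of Proposition \ref{particular+}, exploiting the reflection symmetry recorded in the third bullet of Remarks \ref{remsws}. First I would observe that the Gevrey conditions in the definitions are local, quantified over compact subintervals $[a,b] \subseteq I$, so it suffices to prove the characterization on an arbitrary compact $[a,b]$; this lets me slide the interval to a convenient position. The key structural fact I would invoke is that $y(\cdot)$ is a weak solution of \eqref{1} on $\mathbb{R}$ if and only if both $y_+(t) := y(t)$ and $y_-(t) := y(-t)$ (on $[0,\infty)$) are weak solutions of \eqref{+} and \eqref{-} respectively, the latter being the equation with $-A$ in place of $A$.

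Next I would handle the interval placement. Fix a compact $[a,b] \subseteq I$ and a point $t_0 \in I$; by the time-translation structure of weak solutions (via Theorem \ref{GWS}, shifting $t \mapsto t - t_0$ carries a weak solution to another weak solution), I may assume the relevant subinterval abuts $0$ so that Proposition \ref{particular+} applies. For the portion of $I$ lying in $[t_0, \infty)$ I apply Proposition \ref{particular+} to $y_+$ and the operator $A$; for the portion in $(-\infty, t_0]$ I apply it to $y_-$ and the operator $-A$. Here the crucial point is that the Gevrey classes of vectors are invariant under the sign change $A \mapsto -A$: since $\|(-A)^n f\| = \|A^n f\|$ for all $n \in \mathbb{Z}_+$, we have ${\mathscr E}^{\{\beta\}}(-A) = {\mathscr E}^{\{\beta\}}(A)$ and likewise for the Beurling class, so membership $y(t) \in {\mathscr E}^{\{\beta\}}(A)$ is exactly the condition Proposition \ref{particular+} returns for $y_-$ with $-A$.

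Combining the two one-sided conclusions gives the pointwise membership $y(t) \in {\mathscr E}^{\{\beta\}}(A)$ for each $t \in I$ as the equivalent of Gevrey ultradifferentiability on each compact piece, and the derivative formula $y^{(n)}(t) = A^n y(t)$ transfers directly (note the sign in $(-A)^n$ cancels against the chain-rule factor $(-1)^n$ coming from $y_-(t) = y(-t)$, so the formula holds with $A^n$, not $(-A)^n$). The two overlapping one-sided representations agree on their common point $t_0$, so they patch into a single characterization valid throughout $I$. Finally I would note that the Beurling-type case ${\mathscr E}^{(\beta)}$ runs verbatim with the quantifier on $\alpha$ changed from ``$\exists$'' to ``$\forall$.''

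The main obstacle I anticipate is purely bookkeeping rather than conceptual: making the interval-shifting argument airtight so that an arbitrary compact $[a,b] \subseteq I$ — which need not contain $0$ or even have $0$ as an endpoint — is genuinely covered by a one-sided proposition stated only for subintervals of $[0,\infty)$. This requires checking that time translation $t \mapsto t - c$ preserves both the weak-solution property (immediate from Theorem \ref{GWS}, since $e^{(t-c)A}f = e^{tA}(e^{-cA}f)$ when $f \in \bigcap_t D(e^{tA})$) and the pointwise Gevrey-vector condition, and that the resulting estimates on $[a,b]$ assemble from the estimates produced on each translated piece. Everything else follows mechanically from the sign-invariance of the Gevrey classes and the reflection symmetry of weak solutions.
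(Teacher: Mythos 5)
Your proposal is correct and follows essentially the same route as the paper: reflect $y$ into the pair $y_+(t)=y(t)$ and $y_-(t)=y(-t)$ on $[0,\infty)$, apply Proposition \ref{particular+} to each (with $A$ and $-A$ respectively), and use ${\mathscr E}^{\{\beta\}}(-A)={\mathscr E}^{\{\beta\}}(A)$, ${\mathscr E}^{(\beta)}(-A)={\mathscr E}^{(\beta)}(A)$ together with the cancellation of $(-1)^n$ in the derivative formula. The time-translation bookkeeping you add is harmless but superfluous, since Proposition \ref{particular+} is already stated for an arbitrary subinterval of $[0,\infty)$, so it applies directly to $I\cap[0,\infty)$ and to $(-I)\cap[0,\infty)$ without repositioning anything.
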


\begin{proof}
As is noted in Remarks \ref{remsws}, $y:\R\to X$ is a weak solution of \eqref{1} \textit{iff} 
\[
y_+(t):=y(t),\ t\ge 0,
\]
is a weak solution of equation \eqref{+} and 
\[
y_-(t):=y(-t),\ t\ge 0,
\]
is a weak solution of equation \eqref{-}.

The statement immediately follows from Proposition \ref{particular+} applied to
\[
y_+(t):=y(t),\ t\ge 0,\quad \text{and}\quad y_-(t):=y(-t),\ t\ge 0,
\] 
for an arbitrary weak solution $y(\cdot)$ of equation \eqref{1} in view of
\[
{\mathscr E}^{\{\beta\}}(-A)={\mathscr E}^{\{\beta\}}(A)
\quad\text{and}\quad
{\mathscr E}^{(\beta)}(-A)={\mathscr E}^{(\beta)}(A).
\]
\end{proof}

\section{Gevrey Ultradifferentiability of order $\beta \ge 1$}

\begin{thm}[Gevrey Ultradifferentiability of order $\beta \ge 1$]\label{real}\ \\
Let $A$ be a scalar type spectral operator in a complex Banach space $(X,\|\cdot\|)$ with spectral measure $E_A(\cdot)$ and $ 1\le \beta<\infty$. Then the following statements are equivalent. 
\begin{enumerate}[label={(\roman*)}]
\item Every weak solution of equation \eqref{1} belongs to the $\beta$th-order Beurling-type Gevrey class 
${\mathscr E}^{(\beta )}\left(\R,X\right)$.
\item Every weak solution of equation \eqref{1} belongs to the $\beta$th-order Roumieu-type Gevrey class 
${\mathscr E}^{\{\beta \}}\left(\R,X\right)$.
\item There exist 
$b_+>0$ and $ b_->0$ such that the set $\sigma(A)\setminus {\mathscr P}^\beta_{b_-,b_+}$,
where
\begin{equation*}
{\mathscr P}^\beta_{b_-,b_+}:=\left\{\lambda \in \C\, \middle|\,
\Rep\lambda \le -b_-|\Imp\lambda|^{1/\beta} 
\ \text{or}\ 
\Rep\lambda \ge b_+|\Imp\lambda|^{1/\beta} \right\},
\end{equation*}
is bounded (see Figure \ref{fig:graph4}).
\end{enumerate}
\begin{figure}[h]
\centering
\includegraphics[height=2in]{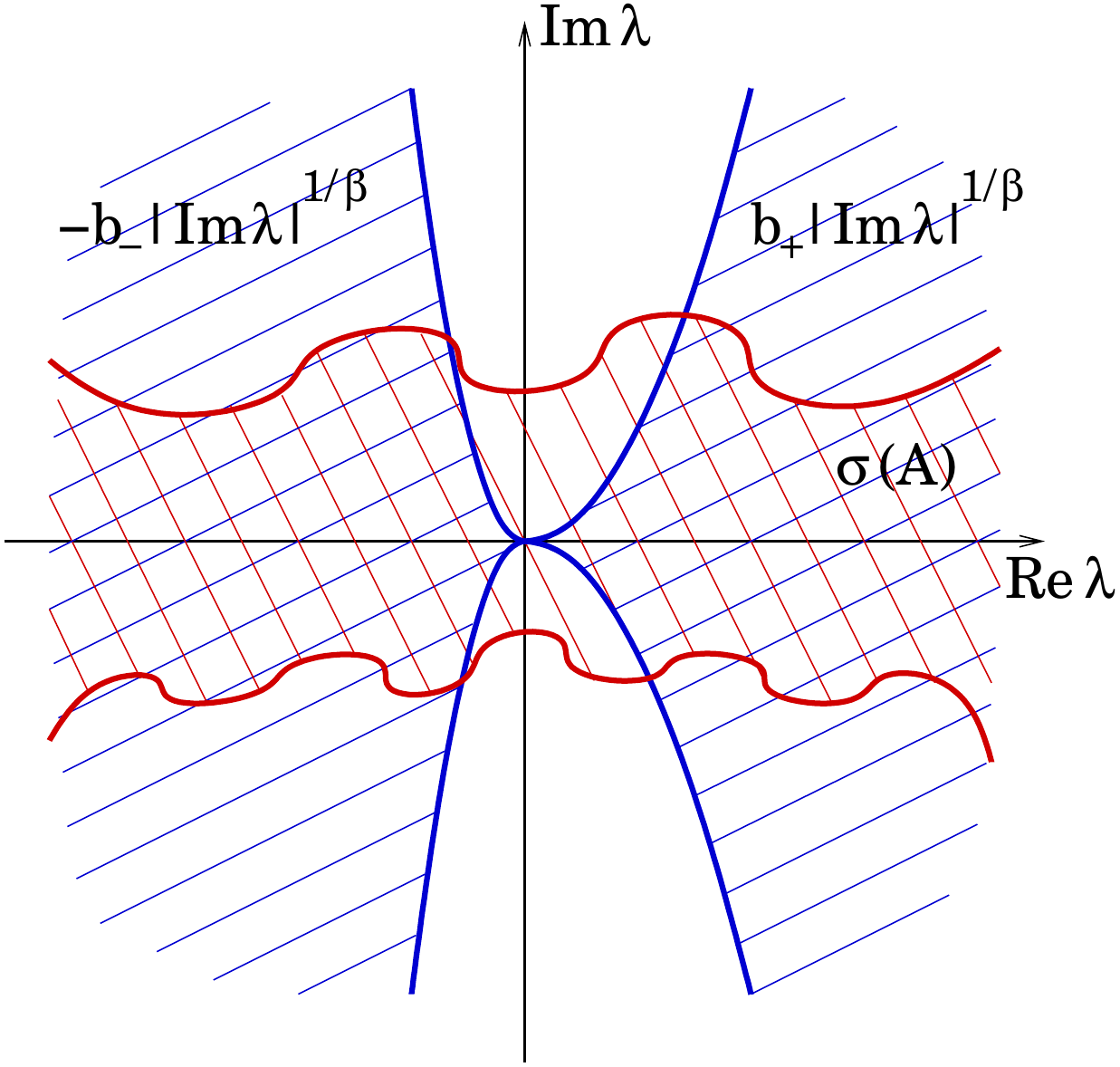}
\caption[]{Gevrey ultradifferentiability of order $1\le \beta<\infty$.}
\label{fig:graph4}
\end{figure}
\end{thm}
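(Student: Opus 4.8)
The plan is to prove the cyclic chain of implications (i)\,$\Rightarrow$\,(ii)\,$\Rightarrow$\,(iii)\,$\Rightarrow$\,(i), first reducing the functional statements (i) and (ii) to membership in the Gevrey \emph{classes of vectors}. By Theorem \ref{GWS} every weak solution is of the form $y(t)=e^{tA}f$ with $f\in\bigcap_{t\in\R}D(e^{tA})$, and by Proposition \ref{particular} such a solution lies in $\mathscr{E}^{(\beta)}(\R,X)$ (resp.\ $\mathscr{E}^{\{\beta\}}(\R,X)$) iff $e^{tA}f\in\mathscr{E}^{(\beta)}(A)$ (resp.\ $\mathscr{E}^{\{\beta\}}(A)$) for every $t\in\R$. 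Throughout I would invoke the descriptions $\mathscr{E}^{\{\beta\}}(A)=\bigcup_{s>0}D(e^{s|A|^{1/\beta}})$ and $\mathscr{E}^{(\beta)}(A)=\bigcap_{s>0}D(e^{s|A|^{1/\beta}})$ together with the domain characterization of Proposition \ref{prop}, thereby converting every membership assertion into the integrability of an appropriate weight against the total variation measures $v(\cdot,g^*,\cdot)$.

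The implication (i)\,$\Rightarrow$\,(ii) is immediate from the inclusion $\mathscr{E}^{(\beta)}(\R,X)\subseteq\mathscr{E}^{\{\beta\}}(\R,X)$. For (iii)\,$\Rightarrow$\,(i), fix a weak solution $y(t)=e^{tA}f$ and $t_0\in\R$; I would show $e^{t_0A}f\in D(e^{s|A|^{1/\beta}})$ for every $s>0$. The key geometric observation is that on $\mathscr{P}^\beta_{b_-,b_+}$ the defining inequality $\Rep\lambda\ge b_+|\Imp\lambda|^{1/\beta}$ (resp.\ $\Rep\lambda\le -b_-|\Imp\lambda|^{1/\beta}$) forces $|\Imp\lambda|\le(|\Rep\lambda|/b_\pm)^\beta$, whence, since $\beta\ge1$,
\[
|\lambda|^{1/\beta}\le c_\beta\bigl(1+|\Rep\lambda|\bigr)
\]
with a constant $c_\beta$ depending only on $\beta,b_-,b_+$. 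Using the density relation $dv(e^{t_0A}f,g^*,\lambda)=e^{t_0\Rep\lambda}\,dv(f,g^*,\lambda)$ and splitting $\sigma(A)$ into the bounded part $\sigma(A)\setminus\mathscr{P}^\beta_{b_-,b_+}$ (on which the weight is bounded and \eqref{tv} applies) and the two branches of $\mathscr{P}^\beta_{b_-,b_+}$, the exponent $s|\lambda|^{1/\beta}+t_0\Rep\lambda$ is dominated by $\tau\Rep\lambda$ for a suitable real $\tau$ (large positive on the right branch, large negative on the left). Finiteness then reduces to $\int_{\sigma(A)}e^{\tau\Rep\lambda}\,dv(f,g^*,\lambda)<\infty$, which holds precisely because $f\in D(e^{\tau A})$ for every $\tau\in\R$; this is exactly where the two-sidedness of $f\in\bigcap_{t\in\R}D(e^{tA})$ is used. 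This verifies condition (i) of Proposition \ref{prop}, and condition (ii) there I would check by the same splitting combined with the uniform estimate \eqref{cond(ii)}.

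The substantive direction is (ii)\,$\Rightarrow$\,(iii), which I would prove contrapositively: assuming (iii) fails, I construct a weak solution that is not even Roumieu--Gevrey. Since the complement of $\mathscr{P}^\beta_{b_-,b_+}$ grows as $b_\pm$ decrease, negating (iii) yields that for every $\eps>0$ the set $\{\lambda\in\sigma(A)\mid |\Rep\lambda|<\eps|\Imp\lambda|^{1/\beta}\}$ is unbounded; a diagonal selection then produces $\lambda_n\in\sigma(A)$ with $|\Imp\lambda_n|\to\infty$ and $|\Rep\lambda_n|<\eps_n|\Imp\lambda_n|^{1/\beta}$, where $\eps_n\downarrow0$. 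Writing $R_n:=|\Rep\lambda_n|$ and $S_n:=|\Imp\lambda_n|^{1/\beta}$, and passing to a subsequence with $\sqrt{\eps_n}\,S_n\ge n$ and with $\lambda_n$ so sparse that pairwise disjoint open discs $\Delta_n\ni\lambda_n$ with $E_A(\Delta_n)\ne0$ exist, I would pick unit vectors $e_n\in E_A(\Delta_n)X$ and set
\[
f:=\sum_{n=1}^\infty c_n e_n,\qquad c_n:=e^{-\sqrt{\eps_n}\,S_n}.
\]
On the one hand $\sum_n c_n e^{tR_n}\le\sum_n e^{(t\eps_n-\sqrt{\eps_n})S_n}<\infty$ for every $t\in\R$, so, via \eqref{decompose} and \eqref{cond(ii)}, $f\in\bigcap_{t\in\R}D(e^{tA})$ and $y(t)=e^{tA}f$ is a genuine weak solution by Theorem \ref{GWS}. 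On the other hand $\sum_n c_n e^{sS_n}=\sum_n e^{(s-\sqrt{\eps_n})S_n}=\infty$ for every $s>0$ (its terms tend to $+\infty$), which, through the lower bounds obtained by testing against norming functionals $g_n^*$ of $e_n$ and decomposition \eqref{decompose}, forces the domain conditions of Proposition \ref{prop} for $e^{s|A|^{1/\beta}}$ to fail; hence $f=y(0)\notin\mathscr{E}^{\{\beta\}}(A)$ and $y\notin\mathscr{E}^{\{\beta\}}(\R,X)$ by Proposition \ref{particular}, contradicting (ii).

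I expect the main obstacle to be this last construction in the Banach-space setting. Unlike the Hilbert case there is no orthonormality to exploit, so the estimates establishing both $f\in\bigcap_{t}D(e^{tA})$ and $f\notin\mathscr{E}^{\{\beta\}}(A)$ must be routed through the total variation measures $v(E_A(\Delta_n)e_n,g^*,\cdot)$, the boundedness \eqref{bounded} of the s.m., and the countable additivity \eqref{decompose}; in particular, extracting the lower bounds that \emph{detect} divergence requires choosing, for each $n$, a norming functional $g_n^*$ with $\langle e_n,g_n^*\rangle$ bounded below and assembling these consistently so as to contradict condition (ii) of Proposition \ref{prop}. Verifying that uniform tail condition—both in the affirmative direction (iii)\,$\Rightarrow$\,(i) and in the negative construction—is the most delicate bookkeeping, though it is ultimately driven by the same comparison $|\lambda|^{1/\beta}\le c_\beta(1+|\Rep\lambda|)$ on $\mathscr{P}^\beta_{b_-,b_+}$ and its failure off that region.
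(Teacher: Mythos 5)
Your proposal is correct and, in its overall architecture, matches the paper's proof: the same cyclic chain (i)\,$\Rightarrow$\,(ii)\,$\Rightarrow$\,(iii)\,$\Rightarrow$\,(i), the same reduction via Theorem \ref{GWS}, Proposition \ref{particular}, the descriptions \eqref{GC}, and Proposition \ref{prop}, and for (iii)\,$\Rightarrow$\,(i) the same splitting of $\sigma(A)$ into the bounded complement of ${\mathscr P}^\beta_{b_-,b_+}$ and the two branches, on which $s|\lambda|^{1/\beta}+t\Rep\lambda$ is dominated by $\tau\Rep\lambda$ and finiteness reduces to $f\in D(e^{\tau A})$ for arbitrary real $\tau$ --- exactly where the two-sidedness of $\bigcap_{t\in\R}D(e^{tA})$ enters.

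The one place where you genuinely depart from the paper is the counterexample in the contrapositive proof of (ii)\,$\Rightarrow$\,(iii). The paper splits into two cases according to whether $\{\Rep\lambda_n\}_{n=1}^\infty$ is bounded or unbounded, taking coefficients $k^{-2}$ in the first case and $e^{-n(k)\Rep\lambda_{n(k)}}$ in the second, and in the unbounded case detects divergence through the inequality $|\lambda|\ge\bigl[n(k)^2\Rep\lambda\bigr]^\beta$ on the disks, i.e., by playing $|\lambda|^{1/\beta}$ against $\Rep\lambda$. Your unified choice $c_n=e^{-\sqrt{\eps_n}S_n}$ with $S_n=|\Imp\lambda_n|^{1/\beta}$ and $|\Rep\lambda_n|<\eps_nS_n$, $\sqrt{\eps_n}S_n\ge n$, exploits the chain $|t|R_n<|t|\eps_nS_n\ll\sqrt{\eps_n}S_n\ll sS_n\le s|\lambda_n|^{1/\beta}$ for large $n$: the decay beats $e^{tR_n}$ for every fixed $t\in\R$ yet loses to $e^{sS_n}$ for every fixed $s>0$, so a single vector $f=\sum c_ne_n$ works whether or not the real parts stay bounded. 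This removes the paper's case analysis and is arguably cleaner. What it does not remove is the Banach-space bookkeeping you correctly identify as the crux: the uniform lower bound $d_n=\dist\bigl(e_n,\spa(\{e_i\mid i\ne n\})\bigr)\ge\varepsilon$ obtained from the boundedness \eqref{bounded} of the s.m., the Hahn--Banach functionals with $\langle e_i,e_j^*\rangle=\delta_{ij}d_i$ assembled into a single $h^*=\sum n^{-2}e_n^*$ so that one fixed functional witnesses $\int_{\sigma(A)}e^{s|\lambda|^{1/\beta}}\,dv(f,h^*,\lambda)=\infty$, and the verification of the uniform tail condition (ii) of Proposition \ref{prop} for $f\in\bigcap_{t\in\R}D(e^{tA})$, which (as in the paper's unbounded case) is most easily routed through an auxiliary vector such as $\tilde h=\sum\sqrt{c_n}\,e_n$ together with \eqref{decompose}, \eqref{cond(ii)}, and the strong continuity of the s.m. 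All of these are available with your coefficients, so the proposal stands.
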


\begin{proof}
We are to prove the closed chain of implications
\begin{equation*}
\text{(i)}\Rightarrow \text{(ii)}\Rightarrow
\text{(iii)}\Rightarrow
\text{(i)},
\end{equation*} 
the implication $\text{(i)}\Rightarrow \text{(ii)}$
following immediately from the inclusion
\[
{\mathscr E}^{(\beta )}\left(\R,X\right)
\subseteq {\mathscr E}^{\{\beta\}}\left(\R,X\right)
\]
(see Remarks \ref{remsGCV}).

To prove the implication $\text{(iii)}\Rightarrow \text{(i)}$, suppose that there exist
$b_+>0$ and $ b_->0$ such that the set $\sigma(A)\setminus {\mathscr P}^\beta_{b_-,b_+}$
is \textit{bounded} and let $y(\cdot)$ be an arbitrary weak solution of equation \eqref{1}. 

By Theorem \ref{GWS}, 
\begin{equation*}
y(t)=e^{tA}f,\ t\in \R,\ \text{with some}\
f \in \bigcap_{t\in \R}D(e^{tA}).
\end{equation*}

Our purpose is to show that $y(\cdot)\in {\mathscr E}^{(\beta )}\left(\R,X\right)$, which, by Proposition \ref{particular} and \eqref{GC}, is accomplished by showing that, for each $t\in\R$,
\[
y(t)\in {\mathscr E}^{(\beta )}\left(A\right)
=\bigcap_{s>0} D(e^{s|A|^{1/\beta}}).
\]

Let us proceed by proving that, for any $t\in\R$ and $s>0$,
\[
y(t)\in D(e^{s|A|^{1/\beta}})
\] 
via Proposition \ref{prop}.

For any $s>0$, $t\in \R$ and an arbitrary $g^*\in X^*$,
\begin{multline}\label{first}
\int\limits_{\sigma(A)}e^{s|\lambda|^{1/\beta}}e^{t\Rep\lambda}\,dv(f,g^*,\lambda)
=\int\limits_{\sigma(A)\setminus{\mathscr P}_{b_-,b_+}^\beta}e^{s|\lambda|^{1/\beta}}e^{t\Rep\lambda}\,dv(f,g^*,\lambda)
\\
\shoveleft{
+\int\limits_{\left\{\lambda\in \sigma(A)\cap{\mathscr P}_{b_-,b_+}^\beta\,\middle|\,-1<\Rep\lambda<1 \right\}}e^{s|\lambda|^{1/\beta}}e^{t\Rep\lambda}\,dv(f,g^*,\lambda)
}\\
\shoveleft{
+\int\limits_{\left\{\lambda\in \sigma(A)\cap{\mathscr P}_{b_-,b_+}^\beta\,\middle|\,\Rep\lambda\ge 1 \right\}}e^{s|\lambda|^{1/\beta}}e^{t\Rep\lambda}\,dv(f,g^*,\lambda)
}\\
\hspace{1.2cm}
+\int\limits_{\left\{\lambda\in \sigma(A)\cap{\mathscr P}_{b_-,b_+}^\beta\,\middle|\,\Rep\lambda\le -1 \right\}}e^{s|\lambda|^{1/\beta}}e^{t\Rep\lambda}\,dv(f,g^*,\lambda)<\infty.
\hfill
\end{multline}

Indeed, 
\[
\int\limits_{\sigma(A)\setminus{\mathscr P}_{b_-,b_+}^\beta}e^{s|\lambda|^{1/\beta}}e^{t\Rep\lambda}\,dv(f,g^*,\lambda)<\infty
\]
and
\[
\int\limits_{\left\{\lambda\in \sigma(A)\cap{\mathscr P}_{b_-,b_+}^\beta\,\middle|\,-1<\Rep\lambda<1 \right\}}e^{s|\lambda|^{1/\beta}}e^{t\Rep\lambda}\,dv(f,g^*,\lambda)<\infty
\]
due to the boundedness of the sets
\[
\sigma(A)\setminus{\mathscr P}_{b_-,b_+}^\beta\ \text{and}\
\left\{\lambda\in \sigma(A)\cap{\mathscr P}_{b_-,b_+}^\beta\;\middle|\;-1<\Rep\lambda<1 \right\},
\]
the continuity of the integrated function on $\C$, and the finiteness of the measure $v(f,g^*,\cdot)$.

Further, for any $s>0$, $t\in\R$ and an arbitrary $g^*\in X^*$,
\begin{multline}\label{interm}
\int\limits_{\left\{\lambda\in \sigma(A)\cap{\mathscr P}_{b_-,b_+}^\beta\,\middle|\,\Rep\lambda\ge 1 \right\}}e^{s|\lambda|^{1/\beta}}e^{t\Rep\lambda}\,dv(f,g^*,\lambda)
\\
\shoveleft{
\le\int\limits_{\left\{\lambda\in \sigma(A)\cap{\mathscr P}_{b_-,b_+}^\beta\,\middle|\,\Rep\lambda\ge 1 \right\}}e^{s\left[|\Rep\lambda|+|\Imp\lambda|\right]^{1/\beta}}e^{t\Rep\lambda}\,dv(f,g^*,\lambda)
}\\
\hfill
\text{since, for $\lambda\in\sigma(A)\cap{\mathscr P}_{b_-,b_+}^\beta$ with $\Rep\lambda\ge 1$, $b_+^{-\beta}\Rep\lambda^\beta\ge |\Imp\lambda|$;}
\\
\shoveleft{
\le 
\int\limits_{\left\{\lambda\in \sigma(A)\cap{\mathscr P}_{b_-,b_+}^\beta\,\middle|\,\Rep\lambda\ge 1 \right\}}e^{s\left[\Rep\lambda+b_+^{-\beta}\Rep\lambda^\beta\right]^{1/\beta}}e^{t\Rep\lambda}\,dv(f,g^*,\lambda)
}\\
\hfill
\text{since, in view of $\Rep\lambda\ge 1$ and $\beta\ge 1$, $\Rep\lambda^\beta\ge\Rep\lambda$;}
\\
\shoveleft{
\le 
\int\limits_{\left\{\lambda\in \sigma(A)\cap{\mathscr P}_{b_-,b_+}^\beta\,\middle|\,\Rep\lambda\ge 1 \right\}}e^{s\left(1+b_+^{-\beta}\right)^{1/\beta}\Rep\lambda}e^{t\Rep\lambda}\,dv(f,g^*,\lambda)
}\\
\shoveleft{
= \int\limits_{\left\{\lambda\in \sigma(A)\cap{\mathscr P}_{b_-,b_+}^\beta\,\middle|\,\Rep\lambda\ge 1 \right\}}e^{\left[s\left(1+b_+^{-\beta}\right)^{1/\beta}+t\right]\Rep\lambda}\,dv(f,g^*,\lambda)
}\\
\hfill
\text{since $f\in \bigcap\limits_{t\in\R}D(e^{tA})$, by Proposition \ref{prop};}
\\
\hspace{1.2cm}
<\infty. 
\hfill
\end{multline}

Finally, for any $s>0$, $t\in\R$ and an arbitrary $g^*\in X^*$,
\begin{multline}\label{interm2}
\int\limits_{\left\{\lambda\in \sigma(A)\cap{\mathscr P}_{b_-,b_+}^\beta\,\middle|\,\Rep\lambda\le -1 \right\}}e^{s|\lambda|^{1/\beta}}e^{t\Rep\lambda}\,dv(f,g^*,\lambda)
\\
\shoveleft{
\le\int\limits_{\left\{\lambda\in \sigma(A)\cap{\mathscr P}_{b_-,b_+}^\beta\,\middle|\,\Rep\lambda\le -1 \right\}}e^{s\left[|\Rep\lambda|+|\Imp\lambda|\right]^{1/\beta}}e^{t\Rep\lambda}\,dv(f,g^*,\lambda)
}\\
\hfill
\text{since, for $\lambda\in\sigma(A)\cap{\mathscr P}_{b_-,b_+}^\beta$ with $\Rep\lambda\le -1$, $b_-^{-\beta}(-\Rep\lambda)^\beta\ge |\Imp\lambda|$;}
\\
\shoveleft{
\le 
\int\limits_{\left\{\lambda\in \sigma(A)\cap{\mathscr P}_{b_-,b_+}^\beta\,\middle|\,\Rep\lambda\le -1 \right\}}e^{s\left[-\Rep\lambda+b_-^{-\beta}(-\Rep\lambda)^\beta\right]^{1/\beta}}e^{t\Rep\lambda}\,dv(f,g^*,\lambda)
}\\
\hfill
\text{since, in view of $-\Rep\lambda\ge 1$ and $\beta\ge 1$, $(-\Rep\lambda)^\beta\ge-\Rep\lambda$;}
\\
\shoveleft{
\le 
\int\limits_{\left\{\lambda\in \sigma(A)\cap{\mathscr P}_{b_-,b_+}^\beta\,\middle|\,\Rep\lambda\le -1 \right\}}e^{s\left(1+b_-^{-\beta}\right)^{1/\beta}(-\Rep\lambda)}e^{t\Rep\lambda}\,dv(f,g^*,\lambda)
}\\
\shoveleft{
= \int\limits_{\left\{\lambda\in \sigma(A)\cap{\mathscr P}_{b_-,b_+}^\beta\,\middle|\,\Rep\lambda\le -1 \right\}}e^{\left[t-s\left(1+b_-^{-\beta}\right)^{1/\beta}\right]\Rep\lambda}\,dv(f,g^*,\lambda)
}\\
\hfill
\text{since $f\in \bigcap\limits_{t\in \R}D(e^{tA})$, by Proposition \ref{prop};}
\\
\hspace{1.2cm}
<\infty. 
\hfill
\end{multline}

Also, for any $s>0$, $t\in\R$ and an arbitrary $n\in\N$,
\begin{multline}\label{second}
\sup_{\{g^*\in X^*\,|\,\|g^*\|=1\}}
\int\limits_{\left\{\lambda\in\sigma(A)\,\middle|\,e^{s|\lambda|^{1/\beta}}e^{t\Rep\lambda}>n\right\}}
e^{s|\lambda|^{1/\beta}}e^{t\Rep\lambda}\,dv(f,g^*,\lambda)
\\
\shoveleft{
\le \sup_{\{g^*\in X^*\,|\,\|g^*\|=1\}}
\int\limits_{\left\{\lambda\in\sigma(A)\setminus{\mathscr P}_{b_-,b_+}^\beta\,\middle|\,e^{s|\lambda|^{1/\beta}}e^{t\Rep\lambda}>n\right\}}e^{s|\lambda|^{1/\beta}}e^{t\Rep\lambda}\,dv(f,g^*,\lambda)
}\\
\shoveleft{
+ \sup_{\{g^*\in X^*\,|\,\|g^*\|=1\}}
\int\limits_{\left\{\lambda\in\sigma(A)\cap{\mathscr P}_{b_-,b_+}^\beta\,\middle|\,-1<\Rep\lambda<1,\, e^{s|\lambda|^{1/\beta}}e^{t\Rep\lambda}>n\right\}}e^{s|\lambda|^{1/\beta}}e^{t\Rep\lambda}\,dv(f,g^*,\lambda)
}\\
\shoveleft{
+ \sup_{\{g^*\in X^*\,|\,\|g^*\|=1\}}
\int\limits_{\left\{\lambda\in\sigma(A)\cap{\mathscr P}_{b_-,b_+}^\beta\,\middle|\,\Rep\lambda\ge 1,\, e^{s|\lambda|^{1/\beta}}e^{t\Rep\lambda}>n\right\}}e^{s|\lambda|^{1/\beta}}e^{t\Rep\lambda}\,dv(f,g^*,\lambda)
}\\
\shoveleft{
+ \sup_{\{g^*\in X^*\,|\,\|g^*\|=1\}}
\int\limits_{\left\{\lambda\in\sigma(A)\cap{\mathscr P}_{b_-,b_+}^\beta\,\middle|\,\Rep\lambda\le -1,\, e^{s|\lambda|^{1/\beta}}e^{t\Rep\lambda}>n\right\}}e^{s|\lambda|^{1/\beta}}e^{t\Rep\lambda}\,dv(f,g^*,\lambda)
}\\
\hspace{1.2cm}
\to 0,\ n\to\infty.
\hfill
\end{multline}

Indeed, since, due to the boundedness of the sets
\[
\sigma(A)\setminus{\mathscr P}_{b_-,b_+}^\beta\ \text{and}\
\left\{\lambda\in\sigma(A)\cap{\mathscr P}_{b_-,b_+}^\beta\,\middle|\,-1<\Rep\lambda<1\right\}
\]
and the continuity of the integrated function 
on $\C$,
the sets
\[
\left\{\lambda\in\sigma(A)\setminus{\mathscr P}_{b_-,b_+}^\beta\,\middle|\,e^{s|\lambda|^{1/\beta}}e^{t\Rep\lambda}>n\right\}
\]
and 
\[
\left\{\lambda\in\sigma(A)\cap{\mathscr P}_{b_-,b_+}^\beta\,\middle|\,-1<\Rep\lambda<1,\, e^{s|\lambda|^{1/\beta}}e^{t\Rep\lambda}>n\right\}
\]
are \textit{empty} for all sufficiently large $n\in \N$,
we immediately infer that, for any $s>0$ and $t\in\R$,
\[
\lim_{n\to\infty}\sup_{\{g^*\in X^*\,|\,\|g^*\|=1\}}
\int\limits_{\left\{\lambda\in\sigma(A)\setminus{\mathscr P}_{b_-,b_+}^\beta\,\middle|\,e^{s|\lambda|^{1/\beta}}e^{t\Rep\lambda}>n\right\}}e^{s|\lambda|^{1/\beta}}e^{t\Rep\lambda}\,dv(f,g^*,\lambda)=0
\]
and
\[
\lim_{n\to\infty}\sup_{\{g^*\in X^*\,|\,\|g^*\|=1\}}
\int\limits_{\left\{\lambda\in\sigma(A)\cap{\mathscr P}_{b_-,b_+}^\beta\,\middle|\,-1<\Rep\lambda<1,\, e^{s|\lambda|^{1/\beta}}e^{t\Rep\lambda}>n\right\}}e^{s|\lambda|^{1/\beta}}e^{t\Rep\lambda}\,dv(f,g^*,\lambda)
=0.
\]

Further, for any $s>0$, $t\in\R$ and an arbitrary $n\in\N$,
\begin{multline*}
\sup_{\{g^*\in X^*\,|\,\|g^*\|=1\}}
\int\limits_{\left\{\lambda\in\sigma(A)\cap{\mathscr P}_{b_-,b_+}^\beta\,\middle|\,\Rep\lambda\ge 1,\, e^{s|\lambda|^{1/\beta}}e^{t\Rep\lambda}>n\right\}}e^{s|\lambda|^{1/\beta}}e^{t\Rep\lambda}\,dv(f,g^*,\lambda)
\\
\hfill
\text{as in \eqref{interm};}
\\
\shoveleft{
\le \sup_{\{g^*\in X^*\,|\,\|g^*\|=1\}}
\int\limits_{\left\{\lambda\in\sigma(A)\cap{\mathscr P}_{b_-,b_+}^\beta\,\middle|\,\Rep\lambda\ge 1,\, e^{s|\lambda|^{1/\beta}}e^{t\Rep\lambda}>n\right\}}e^{\left[s\left(1+b_+^{-\beta}\right)^{1/\beta}+t\right]\Rep\lambda}\,dv(f,g^*,\lambda)
}\\
\hfill
\text{since $f\in \bigcap\limits_{t\in \R}D(e^{tA})$, by \eqref{cond(ii)};}
\\
\shoveleft{
\le \sup_{\{g^*\in X^*\,|\,\|g^*\|=1\}}
}\\
\shoveleft{
4M\left\|E_A\left(\left\{\lambda\in\sigma(A)\cap{\mathscr P}_{b_-,b_+}^\beta\,\middle|\,\Rep\lambda\ge 1,\, e^{s|\lambda|^{1/\beta}}e^{t\Rep\lambda}>n\right\}\right)
e^{\left[s\left(1+b_+^{-\beta}\right)^{1/\beta}+t\right]A}f\right\|\|g^*\|
}\\
\shoveleft{
\le 4M\left\|E_A\left(\left\{\lambda\in\sigma(A)\cap{\mathscr P}_{b_-,b_+}^\beta\,\middle|\,\Rep\lambda\ge 1,\, e^{s|\lambda|^{1/\beta}}e^{t\Rep\lambda}>n\right\}\right)
e^{\left[s\left(1+b_+^{-\beta}\right)^{1/\beta}+t\right]A}f\right\|
}\\
\hfill
\text{by the strong continuity of the {\it s.m.};}
\\
\ \
\to 4M\left\|E_A\left(\emptyset\right)e^{\left[s\left(1+b_+^{-\beta}\right)^{1/\beta}+t\right]A}f\right\|=0,\ n\to\infty.
\hfill
\end{multline*}

Finally, for any $s>0$, $t\in\R$ and an arbitrary $n\in\N$,
\begin{multline*}
\sup_{\{g^*\in X^*\,|\,\|g^*\|=1\}}
\int\limits_{\left\{\lambda\in\sigma(A)\cap{\mathscr P}_{b_-,b_+}^\beta\,\middle|\,\Rep\lambda\le -1,\, e^{s|\lambda|^{1/\beta}}e^{t\Rep\lambda}>n\right\}}e^{s|\lambda|^{1/\beta}}e^{t\Rep\lambda}\,dv(f,g^*,\lambda)
\\
\hfill
\text{as in \eqref{interm2};}
\\
\shoveleft{
\le \sup_{\{g^*\in X^*\,|\,\|g^*\|=1\}}
\int\limits_{\left\{\lambda\in\sigma(A)\cap{\mathscr P}_{b_-,b_+}^\beta\,\middle|\,\Rep\lambda\le -1,\, e^{s|\lambda|^{1/\beta}}e^{t\Rep\lambda}>n\right\}}e^{\left[t-s\left(1+b_-^{-\beta}\right)^{1/\beta}\right]\Rep\lambda}\,dv(f,g^*,\lambda)
}\\
\hfill
\text{since $f\in \bigcap\limits_{t\in \R}D(e^{tA})$, by \eqref{cond(ii)};}
\\
\shoveleft{
\le \sup_{\{g^*\in X^*\,|\,\|g^*\|=1\}}
}\\
\shoveleft{
4M\left\|E_A\left(\left\{\lambda\in\sigma(A)\cap{\mathscr P}_{b_-,b_+}^\beta\,\middle|\,\Rep\lambda\ge 1,\, e^{s|\lambda|^{1/\beta}}e^{t\Rep\lambda}>n\right\}\right)
e^{\left[t-s\left(1+b_-^{-\beta}\right)^{1/\beta}\right]A}f\right\|\|g^*\|
}\\
\shoveleft{
\le 4M\left\|E_A\left(\left\{\lambda\in\sigma(A)\cap{\mathscr P}_{b_-,b_+}^\beta\,\middle|\,\Rep\lambda\ge 1,\, e^{s|\lambda|^{1/\beta}}e^{t\Rep\lambda}>n\right\}\right)
e^{\left[t-s\left(1+b_-^{-\beta}\right)^{1/\beta}\right]A}f\right\|
}\\
\hfill
\text{by the strong continuity of the {\it s.m.};}
\\
\ \
\to 4M\left\|E_A\left(\emptyset\right)e^{\left[t-s\left(1+b_-^{-\beta}\right)^{1/\beta}\right]A}f\right\|=0,\ n\to\infty.
\hfill
\end{multline*}

By Proposition \ref{prop} and the properties of the operational calculus (see {\cite[Theorem XVIII.$2.11$ (f)]{Dun-SchIII}}), \eqref{first} and \eqref{second} jointly imply that, for any $s>0$ and $t\in\R$,
\[
f\in D(e^{s|A|^{1/\beta}}e^{tA}),
\]
which, in view of \eqref{GC}, further implies that, for each $t\in\R$, 
\begin{equation*}
y(t)=e^{tA}f\in \bigcap_{s>0} D(e^{s|A|^{1/\beta}})
={\mathscr E}^{(\beta)}(A).
\end{equation*}

Whence, by Proposition \ref{particular}, we infer that
\begin{equation*}
y(\cdot) \in {\mathscr E}^{(\beta)}(\R,X),
\end{equation*}
which completes the proof for the implication $\text{(iii)}\Rightarrow \text{(i)}$.

\smallskip
Let us prove the remaining implication $\text{(ii)}\Rightarrow \text{(iii)}$ {\it by contrapositive} assuming that, for any $b_+>0$ and $b_->0$, the set 
$\sigma(A)\setminus {\mathscr P}_{b_-,b_+}^\beta$ is \textit{unbounded}. In particular, this means that, for any $n\in \N$, unbounded is the set
\begin{equation*}
\sigma(A)\setminus {\mathscr P}^\beta_{n^{-1},n^{-2}}=
\left\{\lambda \in \sigma(A)\,\middle| 
-n^{-2}|\Imp\lambda|^{1/\beta}<\Rep\lambda < n^{-2}|\Imp\lambda|^{1/\beta}\right\}.
\end{equation*} 

Hence, we can choose a sequence $\left\{\lambda_n\right\}_{n=1}^\infty$ of points in the complex plane as follows:
\begin{equation*}
\begin{split}
&\lambda_n \in \sigma(A),\ n\in \N,\\
&-n^{-2}|\Imp\lambda_n|^{1/\beta}<\Rep\lambda_n <n^{-2}|\Imp\lambda_n|^{1/\beta},\ n\in \N,\\
&\lambda_0:=0,\ |\lambda_n|>\max\left[n,|\lambda_{n-1}|\right],\ n\in \N.\\
\end{split}
\end{equation*}

The latter implies, in particular, that the points $\lambda_n$, $n\in\N$, are \textit{distinct} ($\lambda_i \neq \lambda_j$, $i\neq j$).

Since, for each $n\in \N$, the set
\begin{equation*}
\left\{ \lambda \in {\mathbb C}\,\middle|\, 
-n^{-2}|\Imp\lambda|^{1/\beta}<\Rep\lambda <n^{-2}|\Imp\lambda|^{1/\beta},\
|\lambda|>\max\bigl[n,|\lambda_{n-1}|\bigr]\right\}
\end{equation*}
is {\it open} in $\C$, along with the point $\lambda_n$, it contains an {\it open disk}
\begin{equation*}
\Delta_n:=\left\{\lambda \in \C\, \middle|\,|\lambda-\lambda_n|<\varepsilon_n \right\}
\end{equation*} 
centered at $\lambda_n$ of some radius $\varepsilon_n>0$, i.e., for each $\lambda \in \Delta_n$,
\begin{equation}\label{disks1}
-n^{-2}|\Imp\lambda|^{1/\beta}<\Rep\lambda < n^{-2}|\Imp\lambda|^{1/\beta}\ \text{and}\ |\lambda|>\max\bigl[n,|\lambda_{n-1}|\bigr].
\end{equation}

Furthermore, we can regard the radii of the disks to be small enough so that
\begin{equation}\label{radii1}
\begin{split}
&0<\varepsilon_n<\dfrac{1}{n},\ n\in\N,\ \text{and}\\
&\Delta_i \cap \Delta_j=\emptyset,\ i\neq j
\quad \text{(i.e., the disks are {\it pairwise disjoint})}.
\end{split}
\end{equation}

Whence, by the properties of the {\it s.m.}, 
\begin{equation*}
E_A(\Delta_i)E_A(\Delta_j)=0,\ i\neq j,
\end{equation*}
where $0$ stands for the \textit{zero operator} on $X$.

Observe also that the subspaces $E_A(\Delta_n)X$, $n\in \N$, are \textit{nontrivial} since
\[
\Delta_n \cap \sigma(A)\neq \emptyset,\ n\in\N,
\]
with $\Delta_n$ being an \textit{open set} in $\C$. 

In view of the pairwise disjointness of the disks $\Delta_n$, $n\in\N$, we can choose a unit vector $e_n\in E_A(\Delta_n)X$ for each $n\in\N$ obtaining a sequence 
$\left\{e_n\right\}_{n=1}^\infty$ in $X$ such that
\begin{equation}\label{ortho1}
\|e_n\|=1,\ n\in\N,\quad \text{and}\quad E_A(\Delta_i)e_j=\delta_{ij}e_j,\ i,j\in\N,
\end{equation}
where $\delta_{ij}$ is the \textit{Kronecker delta}.

As is easily seen, \eqref{ortho1} implies that the vectors $e_n$, $n\in \N$, are \textit{linearly independent}.

Furthermore, there exists an $\varepsilon>0$ such that
\begin{equation}\label{dist1}
d_n:=\dist\left(e_n,\spa\left(\left\{e_i\,|\,i\in\N,\ i\neq n\right\}\right)\right)\ge\varepsilon,\ n\in\N.
\end{equation}

Indeed, otherwise there exists a subsequence $\left\{d_{n(k)}\right\}_{k=1}^\infty$ such that
\begin{equation*}
d_{n(k)}\to 0,\ k\to\infty.
\end{equation*}

Then, by selecting a vector
\[
f_{n(k)}\in 
\spa\left(\left\{e_i\,|\,i\in\N,\ i\neq n(k)\right\}\right),\ k\in\N,
\] 
such that 
\[
\|e_{n(k)}-f_{n(k)}\|<d_{n(k)}+1/k,\ k\in\N,
\]
we arrive at
\begin{multline*}
1=\|e_{n(k)}\|
\\
\hfill
\text{since, by \eqref{ortho1}, $E_A(\Delta_{n(k)})e_{n(k)}=e_{n(k)}$ and 
$E_A(\Delta_{n(k)})f_{n(k)}=0$;}
\\
\shoveleft{
=\|E_A(\Delta_{n(k)})(e_{n(k)}-f_{n(k)})\|\
\le \|E_A(\Delta_{n(k)})\|\|e_{n(k)}-f_{n(k)}\|
\hfill
\text{by \eqref{bounded};}
}\\
\ \
\le M\|e_{n(k)}-f_{n(k)}\|\le M\left[d_{n(k)}+1/k\right]
\to 0,\ k\to\infty,
\hfill
\end{multline*}
which is a \textit{contradiction} proving \eqref{dist1}. 

As follows from the {\it Hahn-Banach Theorem}, for any $n\in\N$, there is an $e^*_n\in X^*$ such that 
\begin{equation}\label{H-B1}
\|e_n^*\|=1,\ n\in\N,\ \text{and}\ \langle e_i,e_j^*\rangle=\delta_{ij}d_i,\ i,j\in\N.
\end{equation}

Let us consider separately the two possibilities concerning the sequence of the real parts $\{\Rep\lambda_n\}_{n=1}^\infty$: its being \textit{bounded} or \textit{unbounded}. 

First, suppose that the sequence $\{\Rep\lambda_n\}_{n=1}^\infty$ is \textit{bounded}, i.e., there is such an $\omega>0$ that
\begin{equation}\label{bounded1}
|\Rep\lambda_n| \le \omega,\ n\in\N,
\end{equation}
and consider the element
\begin{equation*}
f:=\sum_{k=1}^\infty k^{-2}e_k\in X,
\end{equation*}
which is well defined since $\left\{k^{-2}\right\}_{k=1}^\infty\in l_1$ ($l_1$ is the space of absolutely summable sequences) and $\|e_k\|=1$, $k\in\N$ (see \eqref{ortho1}).

In view of \eqref{ortho1}, by the properties of the \textit{s.m.},
\begin{equation}\label{vectors1}
E_A(\cup_{k=1}^\infty\Delta_k)f=f\quad \text{and}\quad E_A(\Delta_k)f=k^{-2}e_k,\ k\in\N.
\end{equation}

For any $t\ge 0$ and an arbitrary $g^*\in X^*$,
\begin{multline}\label{first1}
\int\limits_{\sigma(A)}e^{t\Rep\lambda}\,dv(f,g^*,\lambda)
\hfill \text{by \eqref{vectors1};}
\\
\shoveleft{
=\int\limits_{\sigma(A)} e^{t\Rep\lambda}\,d v(E_A(\cup_{k=1}^\infty \Delta_k)f,g^*,\lambda)
\hfill
\text{by \eqref{decompose};}
}\\
\shoveleft{
=\sum_{k=1}^\infty\int\limits_{\sigma(A)\cap\Delta_k}e^{t\Rep\lambda}\,dv(E_A(\Delta_k)f,g^*,\lambda)
\hfill 
\text{by \eqref{vectors1};}
}\\
\shoveleft{
=\sum_{k=1}^\infty k^{-2}\int\limits_{\sigma(A)\cap\Delta_k}e^{t\Rep\lambda}\,dv(e_k,g^*,\lambda)
}\\
\hfill
\text{since, for $\lambda\in \Delta_k$, by \eqref{bounded1} and \eqref{radii1},}\ 
\Rep\lambda=\Rep\lambda_k+(\Rep\lambda-\Rep\lambda_k)
\\
\hfill
\le \Rep\lambda_k+|\lambda-\lambda_k|\le \omega+\varepsilon_k\le \omega+1;
\\
\shoveleft{
\le e^{t(\omega+1)}\sum_{k=1}^\infty k^{-2}\int\limits_{\sigma(A)\cap\Delta_k}1\,dv(e_k,g^*,\lambda)
= e^{t(\omega+1)}\sum_{k=1}^\infty k^{-2}v(e_k,g^*,\Delta_k)
}\\
\hfill
\text{by \eqref{tv};}
\\
\hspace{1.2cm}
\le e^{t(\omega+1)}\sum_{k=1}^\infty k^{-2}4M\|e_k\|\|g^*\|
= 4Me^{t(\omega+1)}\|g^*\|\sum_{k=1}^\infty k^{-2}<\infty.
\hfill
\end{multline} 

Also, for any $t<0$ and an arbitrary $g^*\in X^*$,
\begin{multline}\label{ffirst1}
\int\limits_{\sigma(A)}e^{t\Rep\lambda}\,dv(f,g^*,\lambda)
\hfill \text{by \eqref{vectors1};}
\\
\shoveleft{
=\int\limits_{\sigma(A)} e^{t\Rep\lambda}\,d v(E_A(\cup_{k=1}^\infty \Delta_k)f,g^*,\lambda)
\hfill
\text{by \eqref{decompose};}
}\\
\shoveleft{
=\sum_{k=1}^\infty\int\limits_{\sigma(A)\cap\Delta_k}e^{t\Rep\lambda}\,dv(E_A(\Delta_k)f,g^*,\lambda)
\hfill 
\text{by \eqref{vectors1};}
}\\
\shoveleft{
=\sum_{k=1}^\infty k^{-2}\int\limits_{\sigma(A)\cap\Delta_k}e^{t\Rep\lambda}\,dv(e_k,g^*,\lambda)
}\\
\hfill
\text{since, for $\lambda\in \Delta_k$, by \eqref{bounded1} and \eqref{radii1},}\ 
\Rep\lambda=\Rep\lambda_k-(\Rep\lambda_k-\Rep\lambda)
\\
\hfill
\ge \Rep\lambda_k-|\Rep\lambda_k-\Rep\lambda|\ge -\omega-\varepsilon_k\ge -\omega-1;
\\
\shoveleft{
\le e^{-t(\omega+1)}\sum_{k=1}^\infty k^{-2}\int\limits_{\sigma(A)\cap\Delta_k}1\,dv(e_k,g^*,\lambda)
= e^{-t(\omega+1)}\sum_{k=1}^\infty k^{-2}v(e_k,g^*,\Delta_k)
}\\
\hfill
\text{by \eqref{tv};}
\\
\hspace{1.2cm}
\le e^{-t(\omega+1)}\sum_{k=1}^\infty k^{-2}4M\|e_k\|\|g^*\|
= 4Me^{-t(\omega+1)}\|g^*\|\sum_{k=1}^\infty k^{-2}<\infty.
\hfill
\end{multline} 

Similarly, to \eqref{first1} for any $t\ge 0$ and an arbitrary $n\in\N$,
\begin{multline}\label{second1}
\sup_{\{g^*\in X^*\,|\,\|g^*\|=1\}}
\int\limits_{\left\{\lambda\in\sigma(A)\,\middle|\,e^{t\Rep\lambda}>n\right\}} 
e^{t\Rep\lambda}\,dv(f,g^*,\lambda)
\\
\shoveleft{
\le 
\sup_{\{g^*\in X^*\,|\,\|g^*\|=1\}}e^{t(\omega+1)}\sum_{k=1}^\infty k^{-2}
\int\limits_{\left\{\lambda\in\sigma(A)\,\middle|\,e^{t\Rep\lambda}>n\right\}\cap \Delta_k}1\,dv(e_k,g^*,\lambda) 
}\\
\hfill \text{by \eqref{vectors1};}
\\
\shoveleft{
=e^{t(\omega+1)}\sup_{\{g^*\in X^*\,|\,\|g^*\|=1\}}\sum_{k=1}^\infty 
\int\limits_{\left\{\lambda\in\sigma(A)\,\middle|\,e^{t\Rep\lambda}>n\right\}\cap \Delta_k}1\,dv(E_A(\Delta_k)f,g^*,\lambda) 
}\\
\hfill \text{by \eqref{decompose};}
\\
\shoveleft{
= e^{t(\omega+1)}\sup_{\{g^*\in X^*\,|\,\|g^*\|=1\}}
\int\limits_{\{\lambda\in\sigma(A)\,|\,e^{t\Rep\lambda}>n\}}1\,dv(E_A(\cup_{k=1}^\infty\Delta_k)f,g^*,\lambda)
}\\
\hfill \text{by \eqref{vectors1};}
\\
\shoveleft{
= e^{t(\omega+1)}\sup_{\{g^*\in X^*\,|\,\|g^*\|=1\}}
\int\limits_{\{\lambda\in\sigma(A)\,|\,e^{t\Rep\lambda}>n\}}1\,dv(f,g^*,\lambda)
\hfill
\text{by \eqref{cond(ii)};}
}\\
\shoveleft{
\le e^{t(\omega+1)}\sup_{\{g^*\in X^*\,|\,\|g^*\|=1\}}4M\left\|E_A\left(\left\{\lambda\in\sigma(A)\,\middle|\,e^{t\Rep\lambda}>n\right\}\right)f\right\|\|g^*\|
}\\
\shoveleft{
\le 4Me^{t(\omega+1)}\left\|E_A\left(\left\{\lambda\in\sigma(A)\,\middle|\,e^{t\Rep\lambda}>n\right\}\right)f\right\|
}\\
\hfill
\text{by the strong continuity of the {\it s.m.};}
\\
\hspace{1.2cm}
\to 4Me^{t(\omega+1)}\left\|E_A\left(\emptyset\right)f\right\|=0,\ n\to\infty.
\hfill
\end{multline}

Similarly, to \eqref{ffirst1} for any $t<0$ and an arbitrary $n\in\N$,
\begin{multline}\label{ssecond1}
\sup_{\{g^*\in X^*\,|\,\|g^*\|=1\}}
\int\limits_{\left\{\lambda\in\sigma(A)\,\middle|\,e^{t\Rep\lambda}>n\right\}} 
e^{t\Rep\lambda}\,dv(f,g^*,\lambda)
\\
\shoveleft{
\le 
\sup_{\{g^*\in X^*\,|\,\|g^*\|=1\}}e^{-t(\omega+1)}\sum_{k=1}^\infty k^{-2}
\int\limits_{\left\{\lambda\in\sigma(A)\,\middle|\,e^{t\Rep\lambda}>n\right\}\cap \Delta_k}1\,dv(e_k,g^*,\lambda) 
}\\
\hfill \text{by \eqref{vectors1};}
\\
\shoveleft{
=e^{-t(\omega+1)}\sup_{\{g^*\in X^*\,|\,\|g^*\|=1\}}\sum_{k=1}^\infty 
\int\limits_{\left\{\lambda\in\sigma(A)\,\middle|\,e^{t\Rep\lambda}>n\right\}\cap \Delta_k}1\,dv(E_A(\Delta_k)f,g^*,\lambda) 
}\\
\hfill \text{by \eqref{decompose};}
\\
\shoveleft{
= e^{-t(\omega+1)}\sup_{\{g^*\in X^*\,|\,\|g^*\|=1\}}
\int\limits_{\{\lambda\in\sigma(A)\,|\,e^{t\Rep\lambda}>n\}}1\,dv(E_A(\cup_{k=1}^\infty\Delta_k)f,g^*,\lambda)
}\\
\hfill \text{by \eqref{vectors1};}
\\
\shoveleft{
= e^{-t(\omega+1)}\sup_{\{g^*\in X^*\,|\,\|g^*\|=1\}}
\int\limits_{\{\lambda\in\sigma(A)\,|\,e^{t\Rep\lambda}>n\}}1\,dv(f,g^*,\lambda)
\hfill
\text{by \eqref{cond(ii)};}
}\\
\shoveleft{
\le e^{-t(\omega+1)}\sup_{\{g^*\in X^*\,|\,\|g^*\|=1\}}4M\left\|E_A\left(\left\{\lambda\in\sigma(A)\,\middle|\,e^{t\Rep\lambda}>n\right\}\right)f\right\|\|g^*\|
}\\
\shoveleft{
\le 4Me^{-t(\omega+1)}\left\|E_A\left(\left\{\lambda\in\sigma(A)\,\middle|\,e^{t\Rep\lambda}>n\right\}\right)f\right\|
}\\
\hfill
\text{by the strong continuity of the {\it s.m.};}
\\
\hspace{1.2cm}
\to 4Me^{-t(\omega+1)}\left\|E_A\left(\emptyset\right)f\right\|=0,\ n\to\infty.
\hfill
\end{multline}

By Proposition \ref{prop}, \eqref{first1}, \eqref{ffirst1}, \eqref{second1}, and \eqref{ssecond1} jointly imply that 
\[
f\in \bigcap\limits_{t\in\R}D(e^{tA}),
\]
and hence, by Theorem \ref{GWS},
\[
y(t):=e^{tA}f,\ t\in\R,
\]
is a weak solution of equation \eqref{1}.

Let
\begin{equation}\label{functional1}
h^*:=\sum_{k=1}^\infty k^{-2}e_k^*\in X^*,
\end{equation}
the functional being well defined since $\{k^{-2}\}_{k=1}^\infty\in l_1$ and $\|e_k^*\|=1$, $k\in\N$ (see \eqref{H-B1}).

In view of \eqref{H-B1} and \eqref{dist1}, we have:
\begin{equation}\label{funct-dist1}
\langle e_n,h^*\rangle=\langle e_k,k^{-2}e_k^*\rangle=d_k k^{-2}\ge \varepsilon k^{-2},\ k\in\N.
\end{equation}

For any $s>0$,
\begin{multline*}
\int\limits_{\sigma(A)}e^{s|\lambda|^{1/\beta}}\,dv(f,h^*,\lambda)
\hfill
\text{by \eqref{decompose} as in \eqref{first1};}
\\
\shoveleft{
=\sum_{k=1}^\infty k^{-2}\int\limits_{\sigma(A)\cap\Delta_k}e^{s|\lambda|^{1/\beta}}\,dv(e_k,h^*,\lambda)
\hfill
\text{since, for $\lambda\in \Delta_k$, by \eqref{disks1}, $|\lambda|\ge k$;}
}\\
\shoveleft{
\ge
\sum_{k=1}^\infty k^{-2}e^{sk^{1/\beta}}\int\limits_{\sigma(A)\cap\Delta_k}1\,dv(e_k,h^*,\lambda)
= \sum_{k=1}^\infty k^{-2}e^{sk^{1/\beta}} v(e_k,h^*,\Delta_k)
}\\
\shoveleft{
\ge\sum_{k=1}^\infty k^{-2}e^{sk^{1/\beta}}|\langle E_A(\Delta_k)e_k,h^*\rangle|
\hfill
\text{by \eqref{ortho1} and \eqref{funct-dist1};}
}\\
\ \
\ge \sum_{k=1}^\infty \varepsilon k^{-4}e^{sk^{1/\beta}}=\infty.
\hfill
\end{multline*} 

Whence, by Proposition \ref{prop} and \eqref{GC}, we infer that
\[
y(0)=f\notin \bigcup_{s>0} D(e^{s|A|^{1/\beta}})
={\mathscr E}^{\{\beta\}}(A)
\]
which, by Proposition \ref{particular}, implies that the weak solution $y(t)=e^{tA}f$, $t\in\R$, 
of equation \eqref{1} does not belong to the Roumieu-type Gevrey class ${\mathscr E}^{\{\beta \}}\left( \R,X\right)$ and completes our consideration of the case of the sequence's $\{\Rep\lambda_n\}_{n=1}^\infty$ being \textit{bounded}. 

Now, suppose that the sequence $\{\Rep\lambda_n\}_{n=1}^\infty$
is \textit{unbounded}. 

Therefore, there is a subsequence $\{\Rep\lambda_{n(k)}\}_{k=1}^\infty$ such that
\[
\Rep\lambda_{n(k)}\to \infty \ \text{or}\ \Rep\lambda_{n(k)}\to -\infty,\ k\to \infty.
\]
Let us consider separately each of the two cases.

First, suppose that 
\[
\Rep\lambda_{n(k)}\to \infty,\ k\to \infty
\] 
Then, without loss of generality, we can regard that
\begin{equation}\label{infinity}
\Rep\lambda_{n(k)} \ge k,\ k\in\N.
\end{equation}

Consider the elements
\begin{equation*}
f:=\sum_{k=1}^\infty e^{-n(k)\Rep\lambda_{n(k)}}e_{n(k)}\in X
\ \text{and}\ h:=\sum_{k=1}^\infty e^{-\frac{n(k)}{2}\Rep\lambda_{n(k)}}e_{n(k)}\in X,
\end{equation*}
well defined since, by \eqref{infinity},
\[
\left\{e^{-n(k)\Rep\lambda_{n(k)}}\right\}_{k=1}^\infty,
\left\{e^{-\frac{n(k)}{2}\Rep\lambda_{n(k)}}\right\}_{k=1}^\infty
\in l_1
\]
and $\|e_{n(k)}\|=1$, $k\in\N$ (see \eqref{ortho1}).

By \eqref{ortho1},
\begin{equation}\label{subvectors1}
E_A(\cup_{k=1}^\infty\Delta_{n(k)})f=f\ \text{and}\
E_A(\Delta_{n(k)})f=e^{-n(k)\Rep\lambda_{n(k)}}e_{n(k)},\
k\in\N,
\end{equation}
and
\begin{equation}\label{subvectors12}
E_A(\cup_{k=1}^\infty\Delta_{n(k)})h=h\ \text{and}\
E_A(\Delta_{n(k)})h=e^{-\frac{n(k)}{2}\Rep\lambda_{n(k)}}e_{n(k)},\ k\in\N.
\end{equation}

For any $t\ge 0$ and an arbitrary $g^*\in X^*$, 
\begin{multline}\label{first2}
\int\limits_{\sigma(A)}e^{t\Rep\lambda}\,dv(f,g^*,\lambda)
\hfill
\text{by \eqref{decompose} as in \eqref{first1};}
\\
\shoveleft{
=\sum_{k=1}^\infty e^{-n(k)\Rep\lambda_{n(k)}}\int\limits_{\sigma(A)\cap\Delta_{n(k)}}e^{t\Rep\lambda}\,dv(e_{n(k)},g^*,\lambda)
}\\
\hfill
\text{since, for $\lambda\in \Delta_{n(k)}$, by \eqref{radii1},}\ \Rep\lambda
=\Rep\lambda_{n(k)}+(\Rep\lambda-\Rep\lambda_{n(k)})
\\
\hfill
\le \Rep\lambda_{n(k)}+|\lambda-\lambda_{n(k)}|\le \Rep\lambda_{n(k)}+1;
\\
\shoveleft{
\le \sum_{k=1}^\infty e^{-n(k)\Rep\lambda_{n(k)}}
e^{t(\Rep\lambda_{n(k)}+1)}
\int\limits_{\sigma(A)\cap\Delta_{n(k)}}1\,dv(e_{n(k)},g^*,\lambda)
}\\
\shoveleft{
= e^t\sum_{k=1}^\infty e^{-[n(k)-t]\Rep\lambda_{n(k)}}v(e_{n(k)},g^*,\Delta_{n(k)})
\hfill
\text{by \eqref{tv};}
}\\
\shoveleft{
\le e^t\sum_{k=1}^\infty e^{-[n(k)-t]\Rep\lambda_{n(k)}}4M\|e_{n(k)}\|\|g^*\|
= 4Me^t\|g^*\|\sum_{k=1}^\infty e^{-[n(k)-t]\Rep\lambda_{n(k)}}
}\\
\hspace{1.2cm}
<\infty.
\hfill
\end{multline}

Indeed, for all $k\in \N$ sufficiently large so that
\[
n(k)\ge t+1,
\]
in view of \eqref{infinity}, 
\[
e^{-[n(k)-t]\Rep\lambda_{n(k)}}\le e^{-k}.
\]

For any $t<0$ and an arbitrary $g^*\in X^*$, 
\begin{multline}\label{ffirst2}
\int\limits_{\sigma(A)}e^{t\Rep\lambda}\,dv(f,g^*,\lambda)
\hfill
\text{by \eqref{decompose} as in \eqref{first1};}
\\
\shoveleft{
=\sum_{k=1}^\infty e^{-n(k)\Rep\lambda_{n(k)}}\int\limits_{\sigma(A)\cap\Delta_{n(k)}}e^{t\Rep\lambda}\,dv(e_{n(k)},g^*,\lambda)
}\\
\hfill
\text{since, for $\lambda\in \Delta_{n(k)}$, by \eqref{radii1},}\ \Rep\lambda
=\Rep\lambda_{n(k)}-(\Rep\lambda_{n(k)}-\Rep\lambda)
\\
\hfill
\ge \Rep\lambda_{n(k)}-|\Rep\lambda_{n(k)}-\Rep\lambda|\ge \Rep\lambda_{n(k)}-1;
\\
\shoveleft{
\le \sum_{k=1}^\infty e^{-n(k)\Rep\lambda_{n(k)}}
e^{t(\Rep\lambda_{n(k)}-1)}
\int\limits_{\sigma(A)\cap\Delta_{n(k)}}1\,dv(e_{n(k)},g^*,\lambda)
}\\
\shoveleft{
= e^{-t}\sum_{k=1}^\infty e^{-[n(k)-t]\Rep\lambda_{n(k)}}v(e_{n(k)},g^*,\Delta_{n(k)})
\hfill
\text{by \eqref{tv};}
}\\
\shoveleft{
\le e^{-t}\sum_{k=1}^\infty e^{-[n(k)-t]\Rep\lambda_{n(k)}}4M\|e_{n(k)}\|\|g^*\|
= 4Me^{-t}\|g^*\|\sum_{k=1}^\infty e^{-[n(k)-t]\Rep\lambda_{n(k)}}
}\\
\hspace{1.2cm}
<\infty.
\hfill
\end{multline}

Indeed, for all $k\in \N$, in view of $t<0$,
\[
n(k)-t\ge n(k)\ge 1,
\]
and hence, in view of \eqref{infinity}, 
\[
e^{-[n(k)-t]\Rep\lambda_{n(k)}}\le e^{-k}.
\]

Similarly to \eqref{first2}, for any $t\ge 0$ and an arbitrary $n\in\N$,
\begin{multline}\label{second2}
\sup_{\{g^*\in X^*\,|\,\|g^*\|=1\}}
\int\limits_{\left\{\lambda\in\sigma(A)\,\middle|\,e^{t\Rep\lambda}>n\right\}}e^{t\Rep\lambda}\,dv(f,g^*,\lambda)
\\
\shoveleft{
\le \sup_{\{g^*\in X^*\,|\,\|g^*\|=1\}}e^t\sum_{k=1}^\infty e^{-[n(k)-t]\Rep\lambda_{n(k)}}
\int\limits_{\left\{\lambda\in\sigma(A)\,\middle|\,e^{t\Rep\lambda}>n\right\}\cap \Delta_{n(k)}}1\,dv(e_{n(k)},g^*,\lambda)
}\\
\shoveleft{
=e^t\sup_{\{g^*\in X^*\,|\,\|g^*\|=1\}}\sum_{k=1}^\infty e^{-\left[\frac{n(k)}{2}-t\right]\Rep\lambda_{n(k)}}
e^{-\frac{n(k)}{2}\Rep\lambda_{(k)}}
}\\
\shoveleft{
\int\limits_{\left\{\lambda\in\sigma(A)\,\middle|\,e^{t\Rep\lambda}>n\right\}\cap \Delta_{n(k)}}1\,dv(e_{n(k)},g^*,\lambda)
}\\
\hfill
\text{since, by \eqref{infinity}, there is an $L>0$ such that
$e^{-\left[\frac{n(k)}{2}-t\right]\Rep\lambda_{n(k)}}\le L$, $k\in\N$;}
\\
\shoveleft{
\le Le^t\sup_{\{g^*\in X^*\,|\,\|g^*\|=1\}}\sum_{k=1}^\infty e^{-\frac{n(k)}{2}\Rep\lambda_{n(k)}}
\int\limits_{\left\{\lambda\in\sigma(A)\,\middle|\,e^{t\Rep\lambda}>n\right\}\cap \Delta_{n(k)}}1\,dv(e_{n(k)},g^*,\lambda)
}\\
\hfill
\text{by \eqref{subvectors12};}
\\
\shoveleft{
= Le^t\sup_{\{g^*\in X^*\,|\,\|g^*\|=1\}}\sum_{k=1}^\infty
\int\limits_{\left\{\lambda\in\sigma(A)\,\middle|\,e^{t\Rep\lambda}>n\right\}\cap \Delta_{n(k)}}1\,dv(E_A(\Delta_{n(k)})h,g^*,\lambda)
}\\
\hfill
\text{by \eqref{decompose};}
\\
\shoveleft{
= Le^t\sup_{\{g^*\in X^*\,|\,\|g^*\|=1\}}
\int\limits_{\left\{\lambda\in\sigma(A)\,\middle|\,e^{t\Rep\lambda}>n\right\}}1\,dv(E_A(\cup_{k=1}^\infty\Delta_{n(k)})h,g^*,\lambda)
}\\
\hfill
\text{by \eqref{subvectors12};}
\\
\shoveleft{
=Le^t\sup_{\{g^*\in X^*\,|\,\|g^*\|=1\}}\int\limits_{\{\lambda\in\sigma(A)\,|\,e^{t\Rep\lambda}>n\}}1\,dv(h,g^*,\lambda)
\hfill
\text{by \eqref{cond(ii)};}
}\\
\shoveleft{
\le Le^t\sup_{\{g^*\in X^*\,|\,\|g^*\|=1\}}4M
\left\|E_A\left(\left\{\lambda\in\sigma(A)\,\middle|\,e^{t\Rep\lambda}>n\right\}\right)h\right\|\|g^*\|
}\\
\shoveleft{
\le 4LMe^t\|E_A(\{\lambda\in\sigma(A)\,|\,e^{t\Rep\lambda}>n\})h\|
}\\
\hfill
\text{by the strong continuity of the {\it s.m.};}
\\
\hspace{1.2cm}
\to 4LMe^t\left\|E_A\left(\emptyset\right)h\right\|=0,\ n\to\infty.
\hfill
\end{multline}

Similarly to \eqref{ffirst2}, for any $t<0$ and an arbitrary $n\in\N$,
\begin{multline}\label{ssecond2}
\sup_{\{g^*\in X^*\,|\,\|g^*\|=1\}}
\int\limits_{\left\{\lambda\in\sigma(A)\,\middle|\,e^{t\Rep\lambda}>n\right\}}e^{t\Rep\lambda}\,dv(f,g^*,\lambda)
\\
\shoveleft{
\le \sup_{\{g^*\in X^*\,|\,\|g^*\|=1\}}e^{-t}\sum_{k=1}^\infty e^{-[n(k)-t]\Rep\lambda_{n(k)}}
\int\limits_{\left\{\lambda\in\sigma(A)\,\middle|\,e^{t\Rep\lambda}>n\right\}\cap \Delta_{n(k)}}1\,dv(e_{n(k)},g^*,\lambda)
}\\
\shoveleft{
=e^{-t}\sup_{\{g^*\in X^*\,|\,\|g^*\|=1\}}\sum_{k=1}^\infty e^{-\left[\frac{n(k)}{2}-t\right]\Rep\lambda_{n(k)}}
e^{-\frac{n(k)}{2}\Rep\lambda_{(k)}}
}\\
\shoveleft{
\int\limits_{\left\{\lambda\in\sigma(A)\,\middle|\,e^{t\Rep\lambda}>n\right\}\cap \Delta_{n(k)}}1\,dv(e_{n(k)},g^*,\lambda)
}\\
\hfill
\text{since, by \eqref{infinity}, there is an $L>0$ such that
$e^{-\left[\frac{n(k)}{2}-t\right]\Rep\lambda_{n(k)}}\le L$, $k\in\N$;}
\\
\shoveleft{
\le Le^{-t}\sup_{\{g^*\in X^*\,|\,\|g^*\|=1\}}\sum_{k=1}^\infty e^{-\frac{n(k)}{2}\Rep\lambda_{n(k)}}
\int\limits_{\left\{\lambda\in\sigma(A)\,\middle|\,e^{t\Rep\lambda}>n\right\}\cap \Delta_{n(k)}}1\,dv(e_{n(k)},g^*,\lambda)
}\\
\hfill
\text{by \eqref{subvectors12};}
\\
\shoveleft{
= Le^{-t}\sup_{\{g^*\in X^*\,|\,\|g^*\|=1\}}\sum_{k=1}^\infty
\int\limits_{\left\{\lambda\in\sigma(A)\,\middle|\,e^{t\Rep\lambda}>n\right\}\cap \Delta_{n(k)}}1\,dv(E_A(\Delta_{n(k)})h,g^*,\lambda)
}\\
\hfill
\text{by \eqref{decompose};}
\\
\shoveleft{
= Le^{-t}\sup_{\{g^*\in X^*\,|\,\|g^*\|=1\}}
\int\limits_{\left\{\lambda\in\sigma(A)\,\middle|\,e^{t\Rep\lambda}>n\right\}}1\,dv(E_A(\cup_{k=1}^\infty\Delta_{n(k)})h,g^*,\lambda)
}\\
\hfill
\text{by \eqref{subvectors12};}
\\
\shoveleft{
=Le^{-t}\sup_{\{g^*\in X^*\,|\,\|g^*\|=1\}}\int\limits_{\{\lambda\in\sigma(A)\,|\,e^{t\Rep\lambda}>n\}}1\,dv(h,g^*,\lambda)
\hfill
\text{by \eqref{cond(ii)};}
}\\
\shoveleft{
\le Le^{-t}\sup_{\{g^*\in X^*\,|\,\|g^*\|=1\}}4M
\left\|E_A\left(\left\{\lambda\in\sigma(A)\,\middle|\,e^{t\Rep\lambda}>n\right\}\right)h\right\|\|g^*\|
}\\
\shoveleft{
\le 4LMe^{-t}\|E_A(\{\lambda\in\sigma(A)\,|\,e^{t\Rep\lambda}>n\})h\|
}\\
\hfill
\text{by the strong continuity of the {\it s.m.};}
\\
\hspace{1.2cm}
\to 4LMe^{-t}\left\|E_A\left(\emptyset\right)h\right\|=0,\ n\to\infty.
\hfill
\end{multline}

By Proposition \ref{prop}, \eqref{first2}, \eqref{ffirst2}, \eqref{second2}, and \eqref{ssecond2} jointly imply that 
\[
f\in \bigcap\limits_{t\in\R}D(e^{tA}),
\]
and hence, by Theorem \ref{GWS},
\[
y(t):=e^{tA}f,\ t\in\R,
\]
is a weak solution of equation \eqref{1}.

Since, for any $\lambda \in \Delta_{n(k)}$, $k\in \N$, by \eqref{radii1}, \eqref{infinity},
\begin{multline*}
\Rep\lambda =\Rep\lambda_{n(k)}-(\Rep\lambda_{n(k)}-\Rep\lambda)
\ge
\Rep\lambda_{n(k)}-|\Rep\lambda_{n(k)}-\Rep\lambda|
\\
\ \ \
\ge 
\Rep\lambda_{n(k)}-\varepsilon_{n(k)}
\ge \Rep\lambda_{n(k)}-1/n(k)\ge k-1\ge 0
\hfill
\end{multline*}
and, by \eqref{disks1},
\[
\Rep\lambda<n(k)^{-2}|\Imp\lambda|^{1/\beta},
\]
we infer that, for any $\lambda \in \Delta_{n(k)}$, $k\in \N$,
\begin{equation*}
|\lambda|\ge|\Imp\lambda|\ge 
\left[n(k)^2\Rep\lambda\right]^\beta\ge \left[n(k)^2(\Rep\lambda_{n(k)}-1/n(k))\right]^\beta.
\end{equation*}

Using this estimate, for an arbitrary $s>0$ and the functional $h^*\in X^*$ defined by \eqref{functional1}, we have:
\begin{multline}\label{notin}
\int\limits_{\sigma(A)}e^{s|\lambda|^{1/\beta}}\,dv(f,h^*,\lambda)
\hfill
\text{by \eqref{decompose} as in \eqref{first1};}
\\
\shoveleft{
=\sum_{k=1}^\infty e^{-n(k)\Rep\lambda_{n(k)}}\int\limits_{\sigma(A)\cap\Delta_{n(k)}}e^{s|\lambda|^{1/\beta}}\,dv(e_{n(k)},h^*,\lambda)
}\\
\shoveleft{
\ge\sum_{k=1}^\infty e^{-n(k)\Rep\lambda_{n(k)}}e^{sn(k)^2(\Rep\lambda_{n(k)}-1/n(k))}v(e_{n(k)},h^*,\Delta_{n(k)})
}\\
\shoveleft{
\ge \sum_{k=1}^\infty e^{-n(k)\Rep\lambda_{n(k)}}e^{sn(k)^2(\Rep\lambda_{n(k)}-1/n(k))}|\langle E_A(\Delta_{n(k)})e_{n(k)},h^*\rangle|
}\\
\hfill
\text{by \eqref{ortho1} and \eqref{funct-dist1};}
\\
\hspace{1.2cm}
\ge \sum_{k=1}^\infty \varepsilon
e^{(sn(k)-1)n(k)\Rep\lambda_{n(k)}-sn(k)}n(k)^{-2}
=\infty.
\hfill
\end{multline} 

Indeed, for all $k\in\N$ sufficiently large so that 
\begin{equation*}
sn(k)\ge s+2,
\end{equation*}
in view of \eqref{infinity},
\begin{multline*}
e^{(sn(k)-1)n(k)\Rep\lambda_{n(k)}-sn(k)}n(k)^{-2}
\ge 
e^{(s+1)n(k)-sn(k)}n(k)^{-2}=e^{n(k)}n(k)^{-2}
\\
\ \
\to\infty,\ k\to\infty.
\hfill
\end{multline*}

Whence, by Proposition \ref{prop} and \eqref{GC}, we infer that
\begin{equation*}
y(0)=f\notin \bigcup_{s>0} D(e^{s|A|^{1/\beta}})
={\mathscr E}^{\{\beta\}}(A),
\end{equation*}
which, by Proposition \ref{particular}, implies that the weak solution $y(t)=e^{tA}f$, $t\in\R$, 
of equation \eqref{1} does not belong to the Roumieu-type Gevrey class ${\mathscr E}^{\{\beta \}}\left( \R,X\right)$. 

The remaining case of
\[
\Rep\lambda_{n(k)}\to -\infty,\ k\to \infty
\] 
is symmetric to the case of
\[
\Rep\lambda_{n(k)}\to \infty,\ k\to \infty
\] 
and is considered in absolutely the same manner, which furnishes a weak solution $y(\cdot)$ of equation \eqref{1} such that
\begin{equation*}
y(0)=f\notin \bigcup_{s>0} D(e^{s|A|^{1/\beta}})
={\mathscr E}^{\{\beta\}}(A),
\end{equation*}
and hence, by Proposition \ref{particular}, not belonging to the Roumieu-type Gevrey class 
${\mathscr E}^{\{\beta \}}\left(\R,X\right)$.

With every possibility concerning $\{\Rep\lambda_n\}_{n=1}^\infty$ considered, 
the proof by contrapositive of the \textit{``only if" part} is complete and so is the proof of the 
entire statement.
\end{proof}

\begin{rem}
Due to the {\it scalar type spectrality} of the operator $A$, Theorem \ref{real} is stated exclusively in terms of the location of its {\it spectrum} in the complex plane, and hence, is an intrinsically qualitative statement (cf. \cite{Markin2018(3),Markin2018(4),Markin2019(2)}).
\end{rem}

From Theorem \ref{real} and {\cite[Theorem $3.1$]{Markin2019(2)}}, the latter characterizing the Roumieu-type strong Gevrey ultradifferentiability of all weak solution of equation \eqref{+} of order $\beta\ge 1$ on $(0,\infty)$, we derive

\begin{cor}\label{case+open}
Let $A$ be a scalar type spectral operator in a complex Banach space and $ 1\le \beta<\infty$. All weak solutions of equation \eqref{+} are $\beta$th-order Roumie-type Gevrey ultradifferentiable on $(0,\infty)$ iff all weak solutions of equation \eqref{1} are $\beta$th-order Beurling-type Gevrey ultradifferentiable on $\R$.
\end{cor}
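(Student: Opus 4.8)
The plan is to reduce both sides of the asserted equivalence to one and the same condition on the location of $\sigma(A)$ and then merely juxtapose the two characterization theorems. By the equivalence of statements (i) and (iii) in Theorem \ref{real}, the requirement that every weak solution of \eqref{1} belong to the Beurling-type class ${\mathscr E}^{(\beta)}(\R,X)$ is the very same as the purely geometric requirement that there exist $b_+>0$ and $b_->0$ with $\sigma(A)\setminus{\mathscr P}^\beta_{b_-,b_+}$ bounded. Thus, by an earlier result of the present paper, the right-hand side of the corollary is already expressed in terms of the spectrum alone.

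Next I would invoke {\cite[Theorem~$3.1$]{Markin2019(2)}}, which characterizes, again exclusively through $\sigma(A)$, the Roumieu-type Gevrey ultradifferentiability of order $\beta\ge1$ of all weak solutions of \eqref{+} on $(0,\infty)$. The crux is to observe that the spectral condition delivered by that theorem coincides with condition (iii) of Theorem \ref{real}, namely the boundedness of $\sigma(A)\setminus{\mathscr P}^\beta_{b_-,b_+}$ for some $b_\pm>0$ (should it be recorded there in a superficially different form, a short geometric comparison identifies it with (iii)). Granting this identification, the corollary is immediate: each side of the stated equivalence is, on its own, equivalent to the same location condition, and hence to the other.

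The step carrying the weight is therefore this identification, and I would make explicit why the forward half-line problem and the two-sided problem see the same parabolic exclusion region, even as one passes from Beurling to Roumieu type. The right wedge $\{\Rep\lambda\ge b_+|\Imp\lambda|^{1/\beta}\}$ is harmless for \eqref{+} because there $|\lambda|^{1/\beta}$ is dominated by $\Rep\lambda$, so the Gevrey weight $e^{s|\lambda|^{1/\beta}}$ is absorbed into $e^{t'\Rep\lambda}$ with $t'>0$ and is controlled by the mere existence of the solution on $[0,\infty)$, by the same computation as in \eqref{interm}. In the left wedge $\{\Rep\lambda\le -b_-|\Imp\lambda|^{1/\beta}\}$, where $|\lambda|^{1/\beta}$ is dominated by $-\Rep\lambda$, one exploits the latitude of the Roumieu type: for each fixed $t>0$ a sufficiently small $s>0$ yields $e^{s|\lambda|^{1/\beta}}e^{t\Rep\lambda}\le e^{(t-sc)\Rep\lambda}\le 1$ with $c=(1+b_-^{-\beta})^{1/\beta}$, so the weight is integrable against the finite measure $v(f,g^*,\cdot)$ with no recourse to any two-sided existence hypothesis. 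Conversely, an unbounded portion of $\sigma(A)$ inside the inner region $\{-b_-|\Imp\lambda|^{1/\beta}<\Rep\lambda<b_+|\Imp\lambda|^{1/\beta}\}$ makes $e^{s|\lambda|^{1/\beta}}$ impossible to dominate by any $e^{t|\Rep\lambda|}$, and the disk-and-vector construction from the proof of $\text{(ii)}\Rightarrow\text{(iii)}$ in Theorem \ref{real} then manufactures a weak solution of \eqref{+} lying outside ${\mathscr E}^{\{\beta\}}(A)$. This symmetry is precisely what renders the two exclusion regions identical, so that the residual difficulty is bookkeeping rather than a genuinely new estimate.
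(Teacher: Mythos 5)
Your proposal is correct and follows essentially the same route as the paper, which derives the corollary simply by juxtaposing condition (iii) of Theorem \ref{real} with the identical spectral condition in {\cite[Theorem $3.1$]{Markin2019(2)}} characterizing Roumieu-type Gevrey ultradifferentiability of weak solutions of \eqref{+} on $(0,\infty)$. The extra discussion you give of why the two exclusion regions coincide is more than the paper itself provides, but it is consistent with the estimates in the proof of Theorem \ref{real}.
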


For $\beta=1$, we obtain the following important particular case.

\begin{cor}[Characterization of the Entireness of Weak Solutions]\label{CEWS}\ \\
Let $A$ be a scalar type spectral operator in a complex Banach space. Every weak solution of equation \eqref{1} is an entire vector function iff there exist $b_+>0$ and $ b_->0$ such that the set $\sigma(A)\setminus {\mathscr P}^1_{b_-,b_+}$,
where
\begin{equation*}
{\mathscr P}^1_{b_-,b_+}:=\left\{\lambda \in \C\, \middle|\,
\Rep\lambda \le -b_-|\Imp\lambda| 
\ \text{or}\ 
\Rep\lambda \ge b_+|\Imp\lambda|\right\},
\end{equation*}
is bounded 
(see Figure \ref{fig:graph6}).

\begin{figure}[h]
\centering
\includegraphics[height=2in]{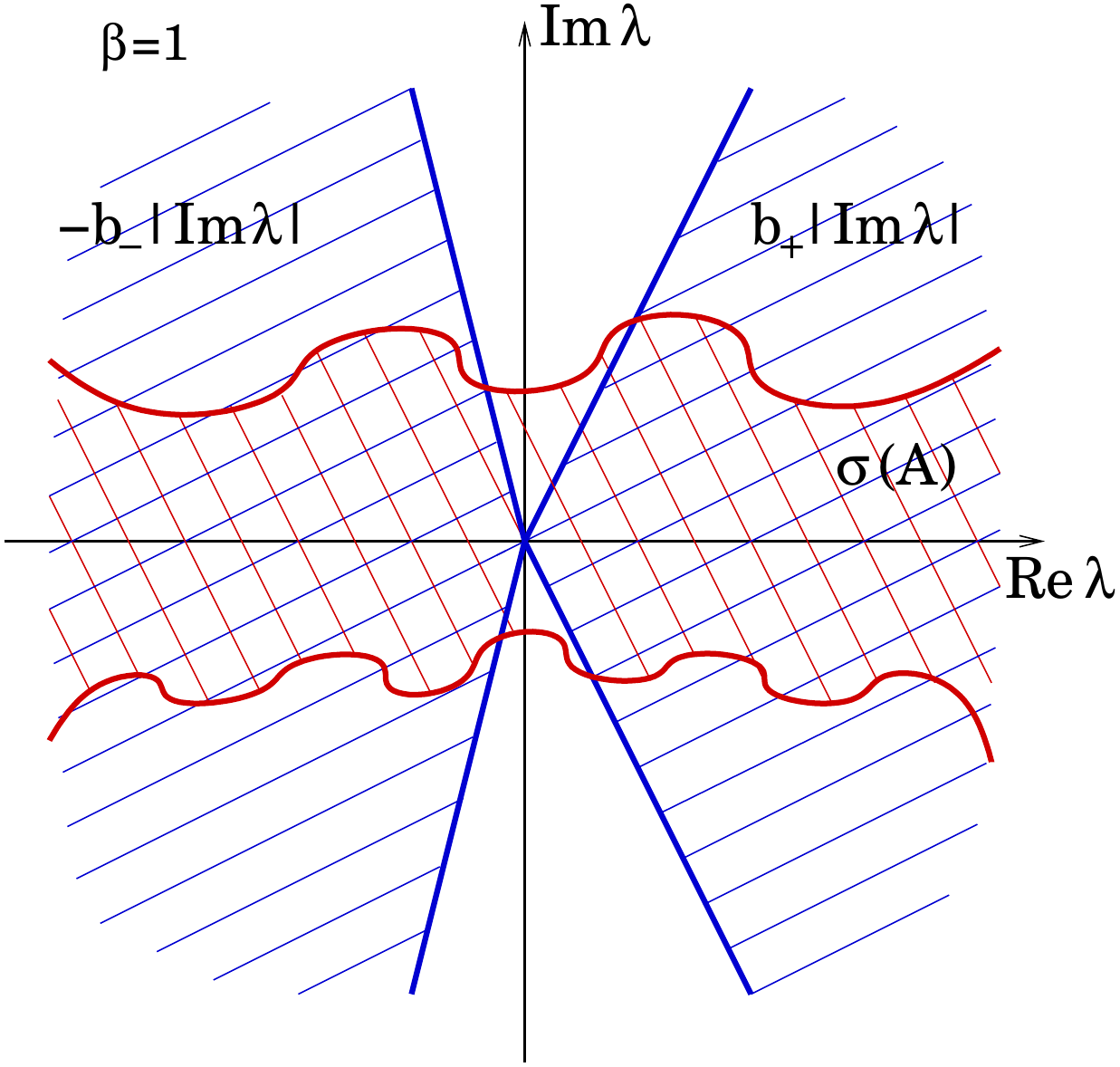}
\caption[]{The case of $\beta=1$.}
\label{fig:graph6}
\end{figure}
\end{cor}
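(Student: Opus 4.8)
The plan is to recognize that this corollary is nothing more than the $\beta=1$ instance of Theorem \ref{real}, and so requires no argument beyond invoking that theorem together with the standard identification of the relevant Gevrey class. First I would recall from the remarks on the Gevrey classes of functions that the first-order Beurling-type Gevrey class $\mathscr{E}^{(1)}(\R,X)$ consists precisely of the \emph{entire} vector functions on $\R$, i.e., those admitting entire continuations. Consequently, the hypothesis that every weak solution of equation \eqref{1} be an entire vector function is, verbatim, statement (i) of Theorem \ref{real} specialized to $\beta=1$.

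Next I would set $\beta=1$ throughout Theorem \ref{real}. Since then $1/\beta=1$, the region $\mathscr{P}^\beta_{b_-,b_+}$ collapses exactly to the region $\mathscr{P}^1_{b_-,b_+}$ appearing in the statement of the corollary, and condition (iii) becomes the assertion that there exist $b_+>0$ and $b_->0$ for which $\sigma(A)\setminus \mathscr{P}^1_{b_-,b_+}$ is bounded. Invoking the equivalence $\text{(i)}\Leftrightarrow\text{(iii)}$ of Theorem \ref{real} then delivers the claimed ``if and only if'' at once.

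There is no genuine obstacle to surmount here: all of the analytic content lives in Theorem \ref{real}, and the corollary is a transparent specialization of it. The only point warranting a word of care is the identification of $\mathscr{E}^{(1)}(\R,X)$ with the class of entire vector functions; but this is already recorded among the preliminary remarks on Gevrey classes of functions and is thus available for direct citation.
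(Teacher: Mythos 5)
Your proposal is correct and coincides with the paper's own treatment: the corollary is presented there as the immediate specialization of Theorem \ref{real} to $\beta=1$, using the equivalence $\text{(i)}\Leftrightarrow\text{(iii)}$ together with the identification of ${\mathscr E}^{(1)}(\R,X)$ with the class of entire vector functions recorded in the remarks on Gevrey classes. No further commentary is needed.
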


\begin{rem}\label{remcews}
As is established in \cite{Markin2018(3)}, all weak solutions of equation \eqref{1} with a scalar type spectral operator $A$ in a complex Banach space can 
be \textit{strongly infinite differentiable}, while the operator $A$ is \textit{unbounded}. Corollary \ref{CEWS} further implies that all weak solutions of equation \eqref{1} with a scalar type spectral operator $A$ in a complex Banach space can be \textit{entire} vector functions without $A$ being bounded, e.g., when $A$ is an unbounded \textit{self-adjoint} operator in a complex Hilbert space (cf. Corollary \ref{real self-adjoint}, \cite[Corollary $4.1$]{Markin2001(1)}, and \cite[Corollary $5.1$]{Markin2001(1)}). This fact contrasts the situation when a closed densely defined linear operator $A$ in a complex Banach space generates a strongly continuous group $\left\{T(t) \right\}_{t\in \R}$ of bounded linear operators, i.e., the associated \textit{ACP} is \textit{well-posed} (see Remarks \ref{remsws}), in which case even the (left or right) strong differentiability of all weak solutions of equation \eqref{1} at $0$ immediately implies \textit{boundedness} for $A$ (cf., e.g., \cite{Engel-Nagel}).
\end{rem}

\section{Inherent Smoothness Improvement Effects}

Theorem \ref{real} implies, in particular, that
\begin{quote}
\textit{if, for some $ 1\le \beta<\infty$, every weak solution of equation \eqref{1} with a scalar type spectral operator $A$ in a complex Banach space $X$ belongs to the $\beta$th-order Roumieu-type Gevrey class 
${\mathscr E}^{\{\beta \}}\left(\R,X\right)$, then all of them belong to the narrower $\beta$th-order Beurling-type Gevrey class 
${\mathscr E}^{(\beta)}\left(\R,X\right)$}.
\end{quote}

Such a jump-like inherent smoothness improvement effect replicates the situation of the strong Gevrey ulradifferentiabilty of weak solutions of equation  \eqref{+} on $[0,\infty)$ {\cite[Theorem $4.1$]{Markin2018(4)}}.
 
Notably, for $\beta =1$, we have: 
\begin{quote}
\textit{if every weak solution of equation \eqref{1} with a scalar type spectral operator $A$ in a complex Banach space $X$ is analytically continuable into a complex neighborhood of $\R$ (each one into its own), then all of them are entire vector functions},
\end{quote}
which can be further strengthened as follows.

\begin{prop}[Smoothness Improvement Effect]\label{smimp1}\ \\
Let $A$ be a scalar type spectral operator in a complex Banach space $(X,\|\cdot\|)$. If every weak solution of equation \eqref{1} is analytically continuable into a complex neighborhood of $0$ (each one into its own), then all of them are entire vector functions.
\end{prop}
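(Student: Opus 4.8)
The plan is to reduce the statement to Corollary \ref{CEWS} by a contrapositive argument, exploiting the fact that the ``bad'' weak solution constructed in the proof of Theorem \ref{real} already fails to be analytic at the single point $t=0$.

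First I would localize the hypothesis. If a weak solution $y(\cdot)$ of \eqref{1} is analytically continuable into a complex neighborhood of $0$, then, restricting to a sufficiently small closed interval $I=[-\delta,\delta]$ contained in the domain of analyticity, we have $y|_I\in{\mathscr E}^{\{1\}}(I,X)$, the first-order Roumieu-type Gevrey class coinciding with the class of analytic vector functions. Applying Proposition \ref{particular} with $\beta=1$ to this interval, I obtain that $y(t)\in{\mathscr E}^{\{1\}}(A)$ for each $t\in I$, and in particular
\[
y(0)\in {\mathscr E}^{\{1\}}(A).
\]
Thus the hypothesis guarantees that the initial value $f=y(0)$ of \emph{every} weak solution is an analytic vector of $A$.

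Next I would show that this forces the spectral condition of Corollary \ref{CEWS}, reasoning by contrapositive. Suppose that condition fails, i.e., for all $b_+>0$ and $b_->0$ the set $\sigma(A)\setminus{\mathscr P}^1_{b_-,b_+}$ is unbounded. This is precisely the negation treated in the proof of the implication $\text{(ii)}\Rightarrow\text{(iii)}$ of Theorem \ref{real} with $\beta=1$, where one constructs a weak solution $y(t)=e^{tA}f$, $t\in\R$, whose initial value satisfies
\[
y(0)=f\notin\bigcup_{s>0}D(e^{s|A|})={\mathscr E}^{\{1\}}(A).
\]
By the previous paragraph, such a $y(\cdot)$ cannot be analytically continuable into any complex neighborhood of $0$, for otherwise $y(0)$ would be an analytic vector of $A$, a contradiction. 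Hence the hypothesis of the proposition fails.

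Taking the contrapositive, the hypothesis forces the existence of $b_+>0$ and $b_->0$ with $\sigma(A)\setminus{\mathscr P}^1_{b_-,b_+}$ bounded, whereupon Corollary \ref{CEWS} yields that every weak solution of \eqref{1} is an entire vector function, as required. The crux of the argument is the first step: the realization that analyticity at the \emph{single} point $0$ already places $y(0)$ in ${\mathscr E}^{\{1\}}(A)$ via Proposition \ref{particular} on an arbitrarily small interval about $0$, and that the solution furnished by the Theorem \ref{real} construction is defective precisely at $t=0$. Everything else is a direct reuse of the construction already carried out for Theorem \ref{real}, so no new estimates are needed.
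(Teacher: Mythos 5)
Your proof is correct, and its first half --- deducing $y(0)\in{\mathscr E}^{\{1\}}(A)$ from analytic continuability into a mere neighborhood of $0$ by restricting to a small interval $[-\delta,\delta]$ and invoking Proposition \ref{particular} with $\beta=1$ --- is precisely the crux of the paper's argument as well (the paper reaches the same membership a bit more explicitly, bounding the Taylor coefficients via $\|y^{(n)}(0)\|\delta^n/n!\le c$ and using $y^{(n)}(0)=A^ny(0)$). Where you diverge is the second half. The paper never re-enters the construction from the proof of Theorem \ref{real}: it takes the contrapositive of the proposition itself, uses the equivalence of (i) and (ii) in Theorem \ref{real} to obtain a weak solution $y(\cdot)\notin{\mathscr E}^{\{1\}}(\R,X)$, extracts via Proposition \ref{particular} a point $t_0$ with $y(t_0)\notin{\mathscr E}^{\{1\}}(A)$, and then \emph{translates}, observing that $y_{t_0}(t):=y(t+t_0)$ is again a weak solution of \eqref{1} whose value at $0$ fails to be an analytic vector. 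You instead negate condition (iii) and rely on the internal detail that the counterexamples built in the proof of $\text{(ii)}\Rightarrow\text{(iii)}$ are defective precisely at $t=0$ --- which is indeed true in all three cases treated there, so your argument goes through. The paper's shift trick has the advantage of using only the \emph{statements} of Theorem \ref{real} and Proposition \ref{particular} rather than the interior of a proof; your version saves the translation step at the cost of that dependence. Either way, as you note, no new estimates are required.
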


\begin{proof}
Let us first show that, if a weak solution $y(\cdot)$ of equation \eqref{1} is analytically continuable into a complex neighborhood of $0$, then $y(0)$ is an {\it analytic vector} of the operator $A$, i.e.,
\begin{equation*}
y(0)\in {\mathscr E}^{\{1\}}(A).
\end{equation*}

Let a weak solution $y(\cdot)$ of equation \eqref{1}
be analytically continuable
into a complex neighborhood of $0$. This implies that there is a $\delta>0$ such that
\begin{equation*}
y(t)=\sum_{n=0}^\infty \dfrac{y^{(n)}(0)}{n!}t^n,\ t\in [-\delta,\delta].
\end{equation*}

The power series converging at $t=\delta$, there is a $c>0$ such that
\begin{equation*}
\biggl\|\dfrac{y^{(n)}(0)}{n!}\delta^n\biggr\|\le c,\ n\in Z_+.
\end{equation*}

Whence, considering that, by Proposition \ref{particular} with $I=[-\delta,\delta]$,
\[
y(0)\in C^\infty(A)\ \text{and}\ y^{(n)}(0)=A^n y(0),\
n\in\Z_+,
\]
we infer that
\begin{equation*}
\|A^ny(0)\|=\|y^{(n)}(0)\|\le c\left[\delta^{-1}\right]^n n!,\ n\in\Z_+,
\end{equation*}
which implies
\begin{equation*}
y(0)\in {\mathscr E}^{\{1\}}(A).
\end{equation*}

Now, let us prove the statement {\it by contrapositive}
assuming that there is a weak solution of 
equation \eqref{1}, which is not an entire vector function. This, by Theorem \ref{real} with $\beta=1$, implies that there is a weak solution $y(\cdot)$ of 
equation \eqref{1}, which is not analytically continuable into a complex neighborhood of $\R$. Then, by Proposition \ref{particular}, for some
$t_0\in\R$,
\[
y(t_0)\not \in {\mathscr E}^{\{1\}}(A).
\]

Therefore, for the weak solution
\[
y_{t_0}(t):=y(t+t_0),\ t\in\R,
\]
of equation \eqref{1},
\[
y_{t_0}(0)=y(t_0)\notin {\mathscr E}^{\{1\}}(A),
\]
which, as is shown above, implies that $y_{t_0}(\cdot)$
is not analytically continuable into a complex neighborhood of $0$, and hence, completes the proof by contrapositive.
\end{proof}

Proposition \ref{smimp1} 
replicates the similar situation for weak solutions of equation  \eqref{+} {\cite[Proposition $5.1$]{Markin2018(4)}}.

\section{Gevrey Ultradifferentiability of Orders Less Than One}

While Corollary \ref{CEWS} implies that all weak solutions of equation \eqref{1} with a scalar type spectral operator $A$ in a complex Banach space can be \textit{entire} vector functions without the operator's $A$ being bounded (see Remark \ref{remcews}), the following analogue
of \cite[Theorem $4.1$]{Markin2019(1)} shows that $A$ cannot remain unbounded, if each weak solution $y(\cdot)$ of \eqref{1}, in addition to being entire, satisfies the growth estimate 
\begin{equation*}
\|y(z)\|\le Me^{\gamma|z|^{1/(1-\beta)}},\ z\in \C,
\end{equation*}
with some $0\le \beta<1$, $\gamma>0$, and $M>0$
depending on $y(\cdot)$ (see \eqref{order}).

\begin{thm}[Gevrey Ultradifferentiability of Orders Less Than One]\label{ol1}\ \\
Let $A$ be a scalar type spectral operator in a complex Banach space $(X,\|\cdot\|)$ with spectral measure $E_A(\cdot)$. If every weak solution of equation \eqref{1} belongs to the $\beta$th-order Roumieu-type Gevrey class ${\mathscr E}^{\{\beta \}}\left(\R,X\right)$ with some $\beta\in [0,1)$ (each one to its own), then the operator $A$ is bounded, and hence, all weak solutions of equation \eqref{1} are necessarily entire vector functions of exponential type.
\end{thm}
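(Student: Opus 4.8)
The plan is to prove the statement \emph{by contrapositive}: assuming $A$ is unbounded, I would manufacture a single weak solution of \eqref{1} lying outside every Gevrey class of order below one, contradicting the hypothesis. Since a scalar type spectral operator is bounded precisely when its spectrum is bounded, the unboundedness of $A$ forces $\sigma(A)$ to be unbounded. I can therefore select distinct points $\lambda_k\in\sigma(A)$ with $|\lambda_k|\to\infty$ (say $|\lambda_k|>k$) and reproduce the geometric scaffolding built in the proof of Theorem \ref{real}: pairwise disjoint open disks $\Delta_k$ centered at $\lambda_k$ of radii $\varepsilon_k<1/k$ with $\Delta_k\cap\sigma(A)\neq\emptyset$, unit vectors $e_k\in E_A(\Delta_k)X$ satisfying $E_A(\Delta_i)e_j=\delta_{ij}e_j$ as in \eqref{ortho1}, a uniform lower bound $d_k\ge\varepsilon>0$ as in \eqref{dist1}, associated Hahn--Banach functionals $e_k^*$ as in \eqref{H-B1}, and the functional $h^*:=\sum_{k=1}^\infty k^{-2}e_k^*$ with $\langle e_k,h^*\rangle=k^{-2}d_k\ge\varepsilon k^{-2}$ as in \eqref{funct-dist1}.

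Following the dichotomy of Theorem \ref{real}, I would split on whether $\{\Rep\lambda_k\}_{k=1}^\infty$ is bounded or unbounded. If it is bounded, I would take $f:=\sum_{k=1}^\infty k^{-2}e_k$; the estimates \eqref{cond(i)} and \eqref{tv} together with Proposition \ref{prop} show, exactly as in \eqref{first1} and \eqref{ffirst1}, that $f\in\bigcap_{t\in\R}D(e^{tA})$, so $y(t):=e^{tA}f$ is a weak solution by Theorem \ref{GWS}. If $\{\Rep\lambda_k\}$ is unbounded, I would pass to a subsequence along which $\Rep\lambda_{n(k)}\to+\infty$ (the case $-\infty$ being symmetric), re-selecting the points so that $\Rep\lambda_{n(k)}$ grows faster than any polynomial in $k$ (possible since the real parts on $\sigma(A)$ are then unbounded), and set $f:=\sum_{k=1}^\infty e^{-k\Rep\lambda_{n(k)}}e_{n(k)}$; the decay $e^{-(k-t)\Rep\lambda_{n(k)}}\le e^{-(k-t)}$ secures $f\in\bigcap_{t\in\R}D(e^{tA})$ as in \eqref{first2} and \eqref{ffirst2}, again yielding a weak solution.

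The decisive step in each case is to show that $\int_{\sigma(A)}e^{s|\lambda|^{1/\beta}}\,dv(f,h^*,\lambda)=\infty$ for every $s>0$ and every $\beta\in(0,1)$. Decomposing this integral over the disks by \eqref{decompose} and using that $|\lambda|\ge|\lambda_k|/2$ on $\Delta_k$ together with $v(e_k,h^*,\Delta_k)\ge|\langle e_k,h^*\rangle|\ge\varepsilon k^{-2}$, I obtain, in the bounded case, the lower bound $\varepsilon\sum_k k^{-4}e^{s(|\lambda_k|/2)^{1/\beta}}$, which diverges since $|\lambda_k|\to\infty$; and in the unbounded case the lower bound $\varepsilon\sum_k k^{-2}e^{-k\Rep\lambda_{n(k)}+s(\Rep\lambda_{n(k)}/2)^{1/\beta}}$. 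Here the exponent $1/\beta>1$ makes the weight super-linear in $\Rep\lambda_{n(k)}$ while the domain-forced decay is only linear, so the super-polynomial growth of $\Rep\lambda_{n(k)}$ drives the exponent to $+\infty$ for every fixed $\beta<1$ and $s>0$ simultaneously, forcing divergence as in \eqref{notin}. By \eqref{GC} this places $f$ outside $\bigcup_{s>0}D(e^{s|A|^{1/\beta}})={\mathscr E}^{\{\beta\}}(A)$ for all $\beta\in(0,1)$, and, since ${\mathscr E}^{\{0\}}(A)\subseteq{\mathscr E}^{\{\beta\}}(A)$ (see Remarks \ref{remsGCV}), also for $\beta=0$.

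It then follows from Proposition \ref{particular} that the weak solution $y(t)=e^{tA}f$ belongs to no class ${\mathscr E}^{\{\beta\}}(\R,X)$ with $\beta\in[0,1)$, contradicting the hypothesis; hence $A$ is bounded. Finally, the boundedness of $A$ confines $\sigma(A)$ to some disk $\{|\lambda|\le r\}$, and the operational-calculus bound $\|A^nf\|\le 4Mr^n\|f\|$ (see \cite{Dun-SchIII}) gives $\|y^{(n)}(t)\|=\|A^ny(t)\|\le 4Mr^n\|y(t)\|$, so every weak solution is entire of exponential type. I expect the main obstacle to be exactly the uniform-in-$\beta$ divergence in the unbounded-real-parts case: the \emph{same} fixed $f$ must defeat the super-linear weight $e^{s|\lambda|^{1/\beta}}$ for every $\beta<1$ at once while still meeting the stringent domain condition, which is precisely what dictates the faster-than-polynomial growth of the chosen real parts.
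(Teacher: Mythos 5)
Your proposal is correct in substance, but it takes a genuinely different route from the paper's. The paper does not argue by contrapositive at all: it splits $A=A_-+A_+$ along the sign of $\Rep\lambda$ via the spectral measure, uses \eqref{ol11} together with \cite[Theorem $4.2$]{Markin2002(1)} to show that every weak solution of $y'(t)=A_+y(t)$, $t\ge 0$, is, up to an additive constant vector, the restriction to $[0,\infty)$ of a weak solution of \eqref{1} and hence inherits the hypothesized Gevrey regularity, then invokes the half-line result \cite[Theorem $4.1$]{Markin2019(1)} to conclude that $A_+$ (and, symmetrically, $A_-$) is bounded, whence $\sigma(A)$ is bounded by \eqref{incl}. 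You instead rebuild the counterexample machinery of Theorem \ref{real} directly on $\R$: assuming $\sigma(A)$ unbounded, you produce one vector $f\in\bigcap_{t\in\R}D(e^{tA})$ with $f\notin\bigcup_{s>0}D(e^{s|A|^{1/\beta}})$ for every $\beta\in(0,1)$ and $s>0$ simultaneously, the essential new ingredient being the super-polynomial growth imposed on $\Rep\lambda_{n(k)}$ in the unbounded-real-parts case, which correctly lets the super-linear weight $e^{s|\lambda|^{1/\beta}}$ (exponent $1/\beta>1$) defeat the merely linear decay forced by the domain condition; the case $\beta=0$ is then absorbed via ${\mathscr E}^{\{0\}}(A)\subseteq{\mathscr E}^{\{\beta\}}(A)$. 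The paper's reduction buys brevity and reuse of an already-established theorem; yours buys self-containedness and an explicit single solution lying outside all orders below one. Two points to tighten: membership in $\bigcap_{t\in\R}D(e^{tA})$ also requires condition (ii) of Proposition \ref{prop}, i.e., the analogues of \eqref{second2} and \eqref{ssecond2} built with the auxiliary vector $h$, not only the integrability estimates \eqref{first2} and \eqref{ffirst2}; and the coefficient in your divergent lower bound should be $\varepsilon\,n(k)^{-2}$ rather than $\varepsilon\,k^{-2}$, which is harmless since the exponential factor dominates any polynomial.
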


\begin{proof}
For the scalar type spectral operators
\[
A_-:=AE_A\left(\left\{\lambda \in \sigma(A)\,
\middle|\,\Rep\lambda< 0\right\}\right)
\]
and
\[
A_+:=AE_A\left(\left\{\lambda \in \sigma(A)\,
\middle|\,\Rep\lambda\ge 0\right\}\right),
\]
by the properties of the operational calculus (see
\cite[Theorem XVIII.$2.11$]{Dun-SchIII}),
\[
A=A_-+A_+.
\]

Let
\begin{equation*}
A_1:=\restr{A}{E_A\left(\left\{\lambda \in \sigma(A)\,
\middle|\,\Rep\lambda< 0\right\}\right)X}
\end{equation*}
and 
\begin{equation*}
A_2:=\restr{A}{E_A\left(\left\{\lambda \in \sigma(A)\,
\middle|\,\Rep\lambda\ge 0\right\}\right)X}
\end{equation*}
where $\restr{\cdot}{\cdot}$ is the \textit{restriction} of an operator (left) to a subspace (right).

Since, by the properties of the operational calculus, the complementary subspaces
\[
E_A\left(\left\{\lambda \in \sigma(A)\,
\middle|\,\Rep\lambda\ge 0\right\}\right)X
\quad \text{and}\quad
E_A\left(\left\{\lambda \in \sigma(A)\,
\middle|\,\Rep\lambda\ge 0\right\}\right)X
\]
reduce the operators $A$, $A_-$, and $A_+$,
\[
\begin{split}
&\sigma(A)=\sigma(A_1)\cup \sigma(A_2),\\
&\sigma(A_1)\subseteq \sigma(A_-)\subseteq \sigma(A_1)\cup \left\{ 0\right\},\\
&\sigma(A_2)\subseteq \sigma(A_+)\subseteq \sigma(A_2)\cup \left\{ 0\right\}
\end{split}
\]
(cf. \cite{Markin2017}), with $\sigma(A_i)=\emptyset$, $i=1,2$, provided the underlying subspace is $\left\{ 0\right\}$.

Therefore, we have the following inclusions:
\begin{equation}\label{incl}
\sigma(A)\subseteq \sigma(A_-)\cup \sigma(A_+)\subseteq \sigma(A)\cup \{0\}.
\end{equation}

By the properties of the operational calculus (see
\cite[Theorem XVIII.$2.11$]{Dun-SchIII}),
\begin{multline}\label{ol11}
e^{tA_+}=
\int\limits_{\sigma(A)} e^{t\lambda \chi_{\left\{\lambda \in \sigma(A)\,
\middle|\,\Rep \lambda\ge 0\right\}}(\lambda)}\,dE_A(\lambda)
\\
\shoveleft{
=
\int\limits_{\left\{\lambda \in \sigma(A)\,
\middle|\,\Rep\lambda\ge 0\right\}} e^{t\lambda}\,dE_A(\lambda)
+
\int\limits_{\left\{\lambda \in \sigma(A)\,
\middle|\,\Rep \lambda<0\right\}} 1\,dE_A(\lambda)
}\\
\ \qquad \quad
=e^{tA}E_A\left(\left\{\lambda \in \sigma(A)\,
\middle|\,\Rep\lambda\ge 0\right\}\right)
+E_A\left(\left\{\lambda \in \sigma(A)\,
\middle|\,\Rep\lambda<0\right\}\right),\  t\in \R,
\hfill
\end{multline}
($\chi_\delta(\cdot)$ is the {\it characteristic function} of a set $\delta\subseteq \C$).

Let
\[
f\in \bigcap_{t\ge 0}D\left(e^{tA_+}\right)
\]
be arbitrary. Then, by \eqref{ol11},
\[
E_A\left(\left\{\lambda \in \sigma(A)\,
\middle|\,\Rep\lambda\ge 0\right\}\right)f\in 
\bigcap_{t\ge 0}D\left(e^{tA}\right),
\]

Since, for arbitrary $t\in \R$ and any Borel set $\delta\subseteq \C$,
\begin{equation*}
\int\limits_\delta e^{t\lambda}\,dv(E_A\left(\left\{\lambda \in \sigma(A)\,
\middle|\,\Rep\lambda\ge 0\right\}\right)f,g^*,\lambda)
=\int\limits_{\delta\cap\left\{\lambda \in \sigma(A)\,
\middle|\,\Rep\lambda\ge 0\right\}}e^{t\lambda}\,dv(f,g^*,\lambda)
\end{equation*}
(see, e.g., \cite[Preliminaries]{Markin2019(1)}), by Proposition \ref{prop}, we infer that
\[
E_A\left(\left\{\lambda \in \sigma(A)\,
\middle|\,\Rep\lambda\ge 0\right\}\right)f\in 
\bigcap_{t<0}D(e^{tA}).
\]

Hence, for any $f\in \bigcap_{t\ge 0}D\left(e^{tA_+}\right)$,
\[
E_A\left(\left\{\lambda \in \sigma(A)\,
\middle|\,\Rep\lambda\ge 0\right\}\right)f\in 
\bigcap_{t\in \R}D(e^{tA}).
\]

This, by \cite[Theorem $4.2$]{Markin2002(1)}, implies that
every weak solution 
\begin{equation*}
e^{tA_+}f=e^{tA}E_A\left(\left\{\lambda \in \sigma(A)\,
\middle|\,\Rep\lambda\ge 0\right\}\right)f
+E_A\left(\left\{\lambda \in \sigma(A)\,
\middle|\,\Rep\lambda<0\right\}\right)f,\  t\ge 0,
\end{equation*}
where
\[
f\in\bigcap_{t\ge 0}D\left(e^{tA_+}\right)
\]
is arbitrary, of the equation 
\begin{equation*}
y'(t)=A_+y(t),\ t\ge 0,
\end{equation*}
along with the weak solution
\[
e^{tA}E_A\left(\left\{\lambda \in \sigma(A)\,
\middle|\,\Rep\lambda\ge 0\right\}\right)f,\ t\in \R,
\]
of equation \eqref{1} and the constant vector function
\[
E_A\left(\left\{\lambda \in \sigma(A)\,
\middle|\,\Rep\lambda<0\right\}\right)f,\ t\in \R,
\]
belongs to the $\beta$th-order Roumieu-type Gevrey class ${\mathscr E}^{\{\beta \}}\left([0,\infty),X\right)$ with some $\beta\in [0,1)$ (each one to its own), which, by \cite[Theorem $4.1$]{Markin2019(1)}, implies that the operator $A_+$ is \textit{bounded}, and hence (see, e.g., \cite{Dun-SchI,Markin2020EOT}), the spectrum
$\sigma(A_+)$ is a \textit{bounded set} in $\C$. 

Using similar reasoning for the scalar type spectral operator $-A_-$ and the evolution equation
\[
y'(t)=-A_-y(t),\ t\ge 0,
\]
(see Remarks \ref{remsws}), one can show that the spectrum of the operator $-A_-$, and hence, of the operator $A_-$ is a \textit{bounded set} in $\C$. 

Thus, from inclusion \eqref{incl}, we infer that $\sigma(A)$ is a \textit{bounded set} in $\C$, which, by \cite[Theorem XVIII.$2.11$ (c)]{Dun-SchIII}, means that the operator $A$ is \textit{bounded} and completes the proof
implying that every weak solution $y(\cdot)$ of equation \eqref{1} is an entire vector function of the form
\[
y(z)=e^{zA}f=\sum_{n=0}^\infty \dfrac{z^n}{n!}A^nf,\ z\in \C,\ \text{with some}\ f\in X,
\] 
and hence, satisfying the growth condition
\[
\|y(z)\|\le \|f\|e^{\|A\||z|}, \ z\in \C,
\]
is of \textit{exponential type} (see Preliminaries).
\end{proof}

\section{The Case of a Normal Operator}

As an important particular case of Theorem \ref{real}, we  obtain

\begin{cor}[Gevrey Ultradifferentiability of order $\beta \ge 1$]\label{realnormal}\ \\
Let $A$ be a normal operator in a complex Hilbert space $X$ and $ 1\le \beta<\infty$. Then the following statements are equivalent. 
\begin{enumerate}[label={(\roman*)}]
\item Every weak solution of equation \eqref{1} belongs to the $\beta$th-order Beurling-type Gevrey class 
${\mathscr E}^{(\beta )}\left(\R,X\right)$.
\item Every weak solution of equation \eqref{1} belongs to the $\beta$th-order Roumieu-type Gevrey class 
${\mathscr E}^{\{\beta \}}\left(\R,X\right)$.
\item There exist
$b_+>0$ and $ b_->0$ such that the set $\sigma(A)\setminus {\mathscr P}^\beta_{b_-,b_+}$,
where
\begin{equation*}
{\mathscr P}^\beta_{b_-,b_+}:=\left\{\lambda \in \C\, \middle|\,
\Rep\lambda \le -b_-|\Imp\lambda|^{1/\beta} 
\ \text{or}\ 
\Rep\lambda \ge b_+|\Imp\lambda|^{1/\beta} \right\},
\end{equation*}
is bounded (see Figure \ref{fig:graph4}).
\end{enumerate}
\end{cor}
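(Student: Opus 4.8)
The plan is to derive this corollary as an immediate specialization of Theorem \ref{real}. The single observation that makes everything work is recorded already in the Preliminaries: in a complex Hilbert space, the scalar type spectral operators are precisely those similar to the normal ones, and in particular every \emph{normal} operator $A$ is itself a scalar type spectral operator, its resolution of the identity $E_A(\cdot)$ being exactly the projection-valued spectral measure furnished by the spectral theorem for normal operators (see \cite{Dun-SchII,Plesner,Wermer}). Thus a normal operator in a complex Hilbert space falls squarely under the hypotheses of Theorem \ref{real}.

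First I would note that, since $A$ is normal, it is a scalar type spectral operator in the complex Banach space $X$ (the Hilbert space, viewed merely as a Banach space under its norm), with spectral measure $E_A(\cdot)$ supported on $\sigma(A)$. The Borel operational calculus, the operator exponentials $e^{tA}$ appearing in the description of weak solutions (Theorem \ref{GWS}), and the Gevrey-class descriptions \eqref{GC} are then all available and coincide with their classical Hilbert-space counterparts; in the normal case the bound \eqref{bounded} holds with $M=1$ since the spectral projections are orthogonal.

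Having verified that the standing assumptions of Theorem \ref{real} are met, I would simply invoke that theorem with the given $\beta\in[1,\infty)$: the equivalence of statements (i), (ii), and (iii) for a scalar type spectral operator applies verbatim, and the set ${\mathscr P}^\beta_{b_-,b_+}$ and its complement have the identical description in terms of $\sigma(A)$. This yields the asserted equivalence for the normal operator $A$.

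There is essentially no technical obstacle here, as the entire content is contained in Theorem \ref{real}; the only point meriting explicit mention is the identification of a normal operator as a scalar type spectral operator, which is a standard fact cited above. Accordingly I would keep the proof to a single sentence, reading along the lines of: the statement is an immediate corollary of Theorem \ref{real} since, in a complex Hilbert space, every normal operator is a scalar type spectral operator whose spectral measure is its resolution of the identity.
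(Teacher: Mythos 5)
Your proposal is correct and matches the paper exactly: the paper presents this corollary as an immediate particular case of Theorem \ref{real}, justified precisely by the standing observation that a normal operator in a complex Hilbert space is a scalar type spectral operator (with its spectral-theorem resolution of the identity as the spectral measure). Nothing further is needed.
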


\begin{rem}
Corollary \ref{realnormal} develops the results of paper \cite{Markin2001(1)}, where similar consideration is given to the strong Gevrey ultradifferentiability of the weak solutions of equation \eqref{+} with a normal operator in a complex Hilbert space on $[0,\infty)$ and $(0,\infty)$.
\end{rem}

For $\beta=1$, we obtain the following important particular case.

\begin{cor}[Characterization of the Entireness of Weak Solutions]\label{CEWSN}\ \\
Let $A$ be a normal operator in a complex Hilbert space. Every weak solution of equation \eqref{1} is an entire vector function iff there exist $b_+>0$ and $ b_->0$ such that the set $\sigma(A)\setminus {\mathscr P}^1_{b_-,b_+}$,
where
\begin{equation*}
{\mathscr P}^1_{b_-,b_+}:=\left\{\lambda \in \C\, \middle|\,
\Rep\lambda \le -b_-|\Imp\lambda| 
\ \text{or}\ 
\Rep\lambda \ge b_+|\Imp\lambda|\right\},
\end{equation*}
is bounded 
(see Figure \ref{fig:graph6}).
\end{cor}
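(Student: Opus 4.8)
The plan is to read off this corollary as the $\beta=1$ specialization of the immediately preceding Corollary \ref{realnormal}. Since a normal operator $A$ in a complex Hilbert space is in particular a scalar type spectral operator (the normal operators being exactly the self-adjointly similar such operators, as recalled in the Preliminaries), Corollary \ref{realnormal} applies to $A$ with no additional hypotheses, and it suffices to specialize it at $\beta=1$.

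Two identifications then complete the argument. First, the sector $\mathscr{P}^1_{b_-,b_+}$ in the present statement is literally the $\beta=1$ instance of $\mathscr{P}^\beta_{b_-,b_+}$ from Corollary \ref{realnormal}, since setting $\beta=1$ turns $|\Imp\lambda|^{1/\beta}$ into $|\Imp\lambda|$; hence condition (iii) of Corollary \ref{realnormal} with $\beta=1$ is verbatim the spectral condition asserted here. Second, by the Remarks following the definition of the Gevrey classes of functions, the first-order Beurling-type class $\mathscr{E}^{(1)}(\R,X)$ consists \emph{precisely} of the entire vector functions, so statement (i) of Corollary \ref{realnormal} with $\beta=1$ says exactly that every weak solution of \eqref{1} is entire. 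The desired equivalence is then the implication chain (i)$\Leftrightarrow$(iii) of Corollary \ref{realnormal}, with the entireness read off from (i).

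I do not expect any genuine obstacle: the whole content is a transcription of a result already established, the only points demanding attention being the two routine verifications above. As a byproduct, the equivalence (i)$\Leftrightarrow$(ii) of Corollary \ref{realnormal} at $\beta=1$ records the smoothness-improvement phenomenon — every weak solution being merely analytic on $\R$ already forcing all of them to be entire — but this is not needed for the statement as phrased.
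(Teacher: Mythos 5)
Your proposal is correct and matches the paper's own (implicit) proof exactly: the corollary is obtained by specializing Corollary \ref{realnormal} to $\beta=1$, identifying ${\mathscr E}^{(1)}(\R,X)$ with the entire vector functions via the Remarks on Gevrey classes, and reading off the equivalence (i)$\Leftrightarrow$(iii). Both routine identifications you flag are precisely the ones the paper relies on, so there is nothing to add.
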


Considering that, for a self-adjoint operator $A$ in a complex Hilbert space $X$,
\[
\sigma(A)\subseteq \R
\]
(see, e.g., \cite{Dun-SchII,Plesner}), by Corollary \ref{CEWSN}, we can strengthen {\cite[Corollary $18$]{Markin2018(3)}} as follows.

\begin{cor}[The Case of a Self-Adjoint Operator]\label{real self-adjoint}\ \\
Every weak solution of equation \eqref{1} 
with a self-adjoint operator $A$ in a complex Hilbert space is an entire vector function.
\end{cor}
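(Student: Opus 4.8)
The plan is to derive Corollary~\ref{real self-adjoint} as a direct specialization of Corollary~\ref{CEWSN}. The key observation is that for a self-adjoint operator $A$ in a complex Hilbert space, the spectrum satisfies $\sigma(A)\subseteq \R$, so every point $\lambda\in\sigma(A)$ has $\Imp\lambda=0$. Therefore I would check that, for \emph{any} choice of $b_+>0$ and $b_->0$, the strip-complement set
\[
\sigma(A)\setminus {\mathscr P}^1_{b_-,b_+}
=\left\{\lambda\in\sigma(A)\,\middle|\, -b_-|\Imp\lambda|<\Rep\lambda<b_+|\Imp\lambda|\right\}
\]
is in fact \emph{empty}. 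Indeed, for a real $\lambda$ we have $|\Imp\lambda|=0$, so the defining inequalities read $0<\Rep\lambda<0$, which no point can satisfy; hence no real $\lambda$ lies in $\sigma(A)\setminus {\mathscr P}^1_{b_-,b_+}$.

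Having established that the set is empty, it is trivially bounded. Thus condition~(iii) of Corollary~\ref{CEWSN} (equivalently, of Corollary~\ref{realnormal} with $\beta=1$) holds for the self-adjoint operator $A$, since a self-adjoint operator is in particular normal. Since all three conditions in Corollary~\ref{CEWSN} are equivalent, the boundedness of $\sigma(A)\setminus {\mathscr P}^1_{b_-,b_+}$ immediately yields that every weak solution of equation~\eqref{1} is an entire vector function. This completes the argument.

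I do not anticipate any real obstacle here; the entire content is the geometric remark that $\sigma(A)\subseteq\R$ forces the complement of the double-cone region ${\mathscr P}^1_{b_-,b_+}$ to meet the spectrum in the empty set, because the cones ${\mathscr P}^1_{b_-,b_+}$ degenerate on the real axis into the whole real line (as $|\Imp\lambda|=0$ makes both $\Rep\lambda\le 0$ and $\Rep\lambda\ge 0$ cover all of $\R$ except possibly forcing one to reconsider the point $\Rep\lambda=0$, which nevertheless satisfies $\Rep\lambda\ge 0=b_+|\Imp\lambda|$). The only point warranting a second glance is $\lambda=0$: there $\Imp\lambda=0$ and $\Rep\lambda=0$, so $\Rep\lambda\ge b_+|\Imp\lambda|$ holds with equality, placing $0\in{\mathscr P}^1_{b_-,b_+}$, consistent with the set-difference being empty. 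Thus the statement follows by a one-line spectral-location check combined with the equivalence already proved in Corollary~\ref{CEWSN}.
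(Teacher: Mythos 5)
Your proposal is correct and follows essentially the same route as the paper, which likewise deduces the corollary from Corollary \ref{CEWSN} by noting that $\sigma(A)\subseteq\R$ for a self-adjoint operator, so that $\sigma(A)\setminus{\mathscr P}^1_{b_-,b_+}$ is empty (hence bounded) for any $b_\pm>0$. Your extra check at $\lambda=0$ is harmless but unnecessary, since every real $\lambda$ satisfies $\Rep\lambda\ge 0=b_+|\Imp\lambda|$ or $\Rep\lambda\le 0=-b_-|\Imp\lambda|$.
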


Cf. {\cite[Corollary $4.1$]{Markin2001(1)}} and {\cite[Corollary $5.1$]{Markin2001(1)}}.

From Corollary \ref{case+open}, we immediately obtain

\begin{cor}\label{casenormal+open}
Let $A$ be a normal operator in a complex Hilbert space and $ 1\le \beta<\infty$. All weak solutions of equation \eqref{+} are $\beta$th-order Roumie-type Gevrey ultradifferentiable on $(0,\infty)$ iff all weak solutions of equation \eqref{1} are $\beta$th-order Beurling-type Gevrey ultradifferentiable on $\R$.
\end{cor}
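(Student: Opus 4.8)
The plan is to recognize that this statement is nothing more than the specialization of Corollary~\ref{case+open} to the Hilbert-space setting, so the entire proof consists of verifying that the hypotheses of that corollary are met and then quoting it verbatim. As recalled in the Preliminaries, in a complex Hilbert space the scalar type spectral operators are precisely those that are similar to the normal ones; in particular, every \emph{normal} operator $A$ in a complex Hilbert space $X$ is itself a scalar type spectral operator, with its usual spectral (projection-valued) measure serving as the resolution of the identity $E_A(\cdot)$, when $X$ is regarded simply as a complex Banach space. Thus the operator $A$ in the present statement satisfies the standing assumptions of Corollary~\ref{case+open}.

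Granting this, I would argue as follows. Apply Corollary~\ref{case+open} with this $A$. Since that corollary is proved at the full generality of scalar type spectral operators in a complex Banach space, it asserts directly that all weak solutions of equation \eqref{+} are $\beta$th-order Roumieu-type Gevrey ultradifferentiable on $(0,\infty)$ if and only if all weak solutions of equation \eqref{1} are $\beta$th-order Beurling-type Gevrey ultradifferentiable on $\R$, which is exactly the claimed equivalence. No separate Hilbert-space argument is required, and the operational-calculus machinery underlying Corollary~\ref{case+open} (hence Theorem~\ref{real} and \cite[Theorem~$3.1$]{Markin2019(2)}) is inherited without change.

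The expository point worth flagging, rather than a genuine obstacle, is precisely the membership of normal operators in the class of scalar type spectral operators; once that is invoked, there is nothing left to prove. I therefore expect the proof to be a one- or two-sentence reduction: state that $A$ is a scalar type spectral operator and cite Corollary~\ref{case+open}. The only care needed is to make sure the \emph{types} line up correctly (Roumieu on $(0,\infty)$ for \eqref{+} paired with Beurling on $\R$ for \eqref{1}), but this matching is already fixed by the statement of Corollary~\ref{case+open} and so transfers automatically.
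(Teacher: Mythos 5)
Your proposal is correct and matches the paper exactly: the paper derives this corollary as an immediate specialization of Corollary \ref{case+open}, relying on the fact that a normal operator in a complex Hilbert space is a scalar type spectral operator. Nothing further is needed.
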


Cf. {\cite[Theorem $4.2$]{Markin2001(1)}}.

For a normal operator in a complex Hilbert space, Proposition \ref{smimp1} and Theorem \ref{ol1} acquire the following form, respectively.

\begin{cor}[Smoothness Improvement Effect]\label{smimp2}\ \\
Let $A$ be a normal operator in a complex Hilbert space. If every weak solution of equation \eqref{1} is analytically continuable into a complex neighborhood of $0$ (each one into its own), then all of them are entire vector functions.
\end{cor}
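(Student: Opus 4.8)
The plan is to recognize Corollary \ref{smimp2} as a direct specialization of Proposition \ref{smimp1}, obtained by merely verifying that a normal operator falls within the scope of the latter. The substantive content has already been carried out in Proposition \ref{smimp1} (and, behind it, in Theorem \ref{real} for $\beta=1$), so no new estimates or constructions are required; the task reduces to a definitional matching.

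First I would invoke the observation recorded in the Preliminaries that, in a complex Hilbert space, the scalar type spectral operators are precisely those similar to normal operators. In particular, a normal operator $A$ is itself a scalar type spectral operator: its resolution of the identity is the orthogonal, projection-valued spectral measure furnished by the spectral theorem, which is strongly $\sigma$-additive and bounded (indeed with $M=1$, the projections being orthogonal), and which has $\sigma(A)$ as its support. Thus the standing hypotheses under which Proposition \ref{smimp1} was proved hold for this $A$ without modification, and the notions of weak solution and of the classes ${\mathscr E}^{\{1\}}(A)$, ${\mathscr E}^{(1)}(A)$ carry over verbatim.

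Granting this, I would simply apply Proposition \ref{smimp1} to the given normal operator $A$: if every weak solution of equation \eqref{1} is analytically continuable into a complex neighborhood of $0$, then every weak solution is an entire vector function, which is exactly the asserted conclusion.

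The only place one might anticipate friction, and the step I would flag, is confirming that the phrases ``analytically continuable into a complex neighborhood of $0$'' and ``entire'' are interpreted identically in the normal and in the general scalar-type-spectral settings. Since the weak solutions (Theorem \ref{GWS}), the first-order Gevrey classes of vectors, and the description \eqref{GC} of ${\mathscr E}^{\{\beta\}}(A)$ and ${\mathscr E}^{(\beta)}(A)$ all specialize from the Banach to the Hilbert case without change—indeed the normal/Hilbert case is precisely where \eqref{GC} was originally established—there is in fact nothing to reconcile, and the corollary follows immediately.
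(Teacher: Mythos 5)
Your proposal is correct and matches the paper's (implicit) argument exactly: the paper presents this corollary as the immediate specialization of Proposition \ref{smimp1} to a normal operator, which, as noted in the Preliminaries, is a scalar type spectral operator in a complex Hilbert space. Nothing further is required.
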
   

\begin{cor}[Gevrey Ultradifferentiability of Orders Less Than One]\label{ol1no}\ \\
Let $A$ be a normal operator in a complex Hilbert space $X$. If every weak solution of equation \eqref{1} belongs to the $\beta$th-order Roumieu type Gevrey class ${\mathscr E}^{\{\beta \}}\left(\R,X\right)$ with $0\le \beta<1$ (each one to its own), then the operator $A$ is bounded, and hence, all weak solutions of equation \eqref{1} are necessarily entire vector functions of exponential type.
\end{cor}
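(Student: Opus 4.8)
The plan is to obtain this statement as a direct specialization of Theorem \ref{ol1} to the normal setting. The crucial observation, already recorded in the Preliminaries, is that in a complex Hilbert space the scalar type spectral operators are precisely those that are \emph{similar} to the normal ones; in particular, every normal operator $A$ is \emph{itself} a scalar type spectral operator, its orthogonal resolution of the identity serving as the spectral measure $E_A(\cdot)$. Consequently, the entire apparatus underlying Theorem \ref{ol1}---the Borel operational calculus, Theorem \ref{GWS} representing every weak solution of \eqref{1} as an orbit $e^{tA}f$, and the Gevrey-class descriptions \eqref{GC}---applies verbatim to such an $A$.

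First I would check that the hypothesis of the corollary is literally that of Theorem \ref{ol1}: every weak solution of \eqref{1} belongs to ${\mathscr E}^{\{\beta\}}(\R,X)$ for some $\beta\in[0,1)$, possibly depending on the solution. Since $A$ is a scalar type spectral operator in a complex Banach (here Hilbert) space, Theorem \ref{ol1} applies and yields at once that $A$ is \emph{bounded}.

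Then I would read off the concluding assertion exactly as in the proof of Theorem \ref{ol1}. Once $A$ is bounded, $\sigma(A)$ is a bounded set, the exponentials $e^{zA}=\sum_{n=0}^\infty (z^n/n!)A^n$ converge for every $z\in\C$, and by Theorem \ref{GWS} each weak solution has the form $y(z)=e^{zA}f$ with some $f\in X$. The elementary estimate $\|y(z)\|\le \|f\|e^{\|A\||z|}$, $z\in\C$, then exhibits every weak solution as an entire vector function of exponential type, which is the remaining claim.

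There is essentially no obstacle here, and that is the point worth stating plainly: the whole content of the corollary is the recognition that normality is a special case of scalar type spectrality, so the result is an immediate restriction of the Banach-space theorem to the Hilbert-space normal case. The only item to verify is that Theorem \ref{ol1} requires no Hilbert-space-specific normalization---but since it is stated and proved for an arbitrary scalar type spectral operator in a complex Banach space, and a normal operator is such an operator, nothing further is needed.
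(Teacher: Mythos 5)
Your proposal is correct and matches the paper's treatment exactly: the corollary is presented there as an immediate specialization of Theorem \ref{ol1}, justified by the fact that a normal operator in a complex Hilbert space is a scalar type spectral operator. Nothing further is required.
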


Observe that Corollary \ref{smimp2} 
replicates the similar situation for weak solutions of equation  \eqref{+} with a normal operator in a complex Hilbert space {\cite[Proposition $5.1$]{Markin2001(1)}}.

\section{Acknowledgments}

The author's appreciation goes to his colleague, Dr.~Maria Nogin of the Department of Mathematics, California State University, Fresno, for her kind assistance with the graphics.


\end{document}